\documentclass[letter]{article}

\usepackage[utf8]{inputenc}
\usepackage[hidelinks]{hyperref}

\usepackage{amsmath, amssymb, amscd, amsthm, amsfonts,gensymb}
\usepackage{mathtools,empheq} 
\usepackage{graphicx,svg}
\usepackage{soul} 
\usepackage{todonotes}
\usepackage{url}
\usepackage{color}
\usepackage{tikz}
\usetikzlibrary{calc}
\usepackage{wrapfig}
\usepackage{subcaption}
\usepackage{caption}

\usepackage[canadian]{babel} 

\usepackage[dvipsnames]{xcolor}
\usepackage{pdfcomment}
\usepackage{enumitem}
\usepackage{cleveref}
\usepackage{wrapfig}
\usepackage{array}
\usepackage{import}
\usepackage{pdfpages}
\usepackage{transparent}

\graphicspath{ {figures/} }
\newtheorem{theorem}{Theorem}

\newtheorem{lemma}[theorem]{Lemma}

\newtheorem{conjecture}[theorem]{Conjecture}

\newtheorem{corollary}[theorem]{Corollary}

\newtheorem{claim}[theorem]{Claim}

\theoremstyle{remark}
\newtheorem{remark}{Remark}

\theoremstyle{definition}

\DeclareMathOperator{\conv}{conv}
\DeclareMathOperator{\rec}{rec}
\DeclareMathOperator{\verts}{vert}
\DeclareMathOperator{\lin}{lin}
\DeclareMathOperator{\aff}{aff}
\DeclareMathOperator{\bd}{bd}
\DeclareMathOperator{\inter}{int}

\DeclareMathOperator{\pos}{pos}

\DeclareMathOperator{\Ill}{Ill}
\DeclareMathOperator{\relint}{relint}

\usepackage[canadian]{babel} 
\usepackage[
backend=biber,
dateabbrev=false,
style=numeric,
giveninits=true,
maxnames=99,
url=true,
doi=true,
]{biblatex}
\AtEveryBibitem{\clearfield{day}}
\AtEveryBibitem{\clearfield{month}}

\DeclareFieldFormat[article]{title}{\mkbibemph{#1}}
\DeclareFieldFormat[online]{title}{\mkbibemph{#1}}
\DeclareFieldFormat[inproceedings]{title}{\mkbibemph{#1}}
\DeclareFieldFormat{titlecase}{#1}

\begin{document}

\title{Illuminating Primitive Polytopes}

\author{Illya Ivanov}

\maketitle

\begin{abstract}

A convex $d$-dimensional polytope may be defined as a bounded intersection of closed halfspaces in $\mathbb{E}^d$ with interior. 
A polytope is primitive if omitting any halfspace renders the intersection unbounded. 
In this paper, we prove the Illumination Conjecture for the primitive polytopes: any primitive convex $d$-polytope $P$ can be illuminated with at most $2^d$ directions; even fewer if $P$ is not a linear image of a $d$-cube. 

\end{abstract}

\section{Introduction}

\subsection{Definitions}

We denote the $d$-dimensional Euclidean space by $\mathbb{E}^d$, and its origin point by $\mathbf{o}$. 
The notation $\left<\cdot, \cdot \right>$ stands for the standard Euclidean product, and we denote the corresponding norm by $\left| \cdot \right| $. 
The Euclidean distance between points $\mathbf{a}, \mathbf{b} \in \mathbb{E}^d$ is denoted by $\left| \mathbf{a}, \mathbf{b} \right|$ . 
The origin-centred unit sphere in $\mathbb{E}^d$ is denoted by $\mathbb{S}^{d-1}$.  
The closed Euclidean ball in $\mathbb{E}^d$ with centre $\mathbf{x}$ and radius $r$ is denoted by $\mathbf{B}_{r}(\mathbf{x})$.
 The closed ray, starting at a point $\mathbf{x} \in \mathbb{E}^d$ with direction $\mathbf{u} \in \mathbb{S}^{d-1} $ is denoted by $R_{\mathbf{u}}(\mathbf{x})$. The line that contains a point $\mathbf{x}$, and is parallel to a direction $\mathbf{u}$, is denoted by $L_{\mathbf{u}}\left( \mathbf{x} \right) $.  The  segment that connects distinct points $\mathbf{a}, \mathbf{b} \in \mathbb{E}^d$ is denoted by $\left[ \mathbf{a}, \mathbf{b} \right] $. 

For some $\alpha\ge 0$ and $\mathbf{u} \in \mathbb{E}^d \setminus \left\{ \mathbf{o} \right\} $, the \textit{hyperplane}\index{hyperplane} $H_{\mathbf{u},\alpha}$ is the set $\left\{ \mathbf{x} \in \mathbb{E}^d : \left< \mathbf{x},\mathbf{u} \right> = \alpha \right\} $. The \textit{closed negative halfspace}\index{negative halfspace} $H^{-}_{\mathbf{u}, \alpha}$
(respectively, the \textit{closed positive halfspace} $H^{+}_{\mathbf{u}, \alpha}$) is the set $\left\{ \mathbf{x} \in \mathbb{E}^d : \left<\mathbf{x},\mathbf{u} \right> \le \alpha\right\} $ $\left( \text{respectively, }\left\{ \mathbf{x} \in \mathbb{E}^d : \left<\mathbf{x},\mathbf{u} \right> \ge \alpha\right\} \right)$. Unless specified otherwise, every mentioned halfspace is closed.

A linear combination $\mathbf{u} = \mu_1 \mathbf{x}_1 + \mu_2 \mathbf{x}_2 + \ldots + \mu_n\mathbf{x}_n$ is \textit{affine}\index{affine combination} if $\sum_{i=1}^{n} \mu_n = 1$.  A \textit{positive combination}\index{positive combination} of points is a linear combination with non-negative coefficients. A linear combination of points is \textit{convex}\index{convex combination}, if it is both affine and positive.
The \textit{linear, affine, positive or convex hull}\index{convex hull}\index{linear hull}\index{affine hull} of $S \subset \mathbb{E}^d$ is, respectively, the set of all the linear, affine, positive, or convex combinations of the points in $S$.
The linear, affine, positive or convex hull of $S \subset \mathbb{E}^d$ is denoted, respectively, by $ \lin S, \aff S, \pos S$, and $ \conv S$.

A set $K \subset \mathbb{E}^d$ is a \textit{convex body} in $\mathbb{E}^d$ if $K$ is convex, closed, bounded, and has a nonempty interior. 
The space of convex bodies in $\mathbb{E}^d$ is denoted $\mathcal{K}^d$.
If $K \in  \mathcal{K}^d$ and $S \subset \bd K$, then the set of all points $\mathbf{x} \in S$, for which there exists a small $\varepsilon$ such that $\left( \mathbf{B}_{\varepsilon}\left( \mathbf{x} \right) \cap \bd K \right) \subset S$, is the \textit{relative interior}\index{relative interior} of $S$.
The relative interior of $S$ is denoted by $\relint S$.

An \textit{affine subspace}\index{affine subspace} $S \subset \mathbb{E}^d$ is a linear subspace of $\mathbb{E}^d$ translated by some vector. The \textit{dimension}\index{dimension of a set} of an affine subspace is the dimension of its generating linear subspace.
The dimension of a set $S \subset \mathbb{E}^d$, denoted by $\text{dim }S$, is the dimension of $\text{ aff }S $.
An affine subspace $S \subset \mathbb{E}^d$ such that $\text{ dim } S = k < d-1$ is called a $k$\textit{-flat}.
An affine subspace $L \subset \mathbb{E}^d$ \textit{supports (is a support subspace of)} a convex body $K \in \mathcal{K}^{d}$ at a point $\mathbf{x} \in  \bd K$  if $\mathbf{x} \in  L$, but $L$ has an empty intersection with $\inter K$. 
In particular, we use the notion of \textit{support hyperplane}\index{support hyperplane}.
If the convex body $K \in  \mathcal{K}^d$ lies in a halfspace $H^{-}_{\mathbf{u}, \alpha}$, and the hyperplane $H_{\mathbf{u},\alpha}$ supports $K$, then $H^{-}_{\mathbf{u}, \alpha}$ is a \textit{support halfspace}\index{support halfspace} of $K$.
If, in addition, $K \subset H^-_{\mathbf{u},\alpha}$, then $\mathbf{u}$ is an \textit{outer normal}\index{outer normal} of $K$ at $\mathbf{x}$.
A \textit{face}\index{face} of $K$ is the intersection of $K$ and some supporting hyperplane of $K$.

Sets $A, B \subset \mathbb{E}^d$ are \textit{separated by}\index{separation}\index{separating hyperplane} a hyperplane $H_{\mathbf{u},\alpha}$ if $A \subset H^{-}_{\mathbf{u},\alpha}, B \subset H^{+}_{\mathbf{u},\alpha}$, or vice versa. If,
in addition,
$A \cap H_{\mathbf{u},\alpha} = B \cap H_{\mathbf{u},\alpha} = \emptyset$,
then $H_{\mathbf{u},\alpha}$ \textit{strictly separates}\index{strict separation}  $A$ and $B$.
The \textit{polar set}\index{polar set} of $S \subset \mathbb{E}^d$ is the set of points $S^{o} = \left\{ \mathbf{x} \in \mathbb{E}^d : \left<\mathbf{x}, \mathbf{y} \right> \le 1 \text{ for all } \mathbf{y} \in S\right\} $. If $K$ is a convex body, and $F \subset \bd K$ is a face of $K$, then the \textit{face of $K^{o}$ polar to $F$} is the set $F^{*} = \left\{ \mathbf{x} \in \bd K^{o} : \left<\mathbf{y}, \mathbf{x} \right> = 1 \text{ for all } \mathbf{y} \in F\right\} $. If a convex body $K$ has a face $F$, then the \textit{polar face of}\index{polar face} $F$, denoted by $F^{*}$, is the following set:

\begin{equation}
	F^{*} = \left\{ \mathbf{x} \in \bd F^{*} : \left<\mathbf{x}, \mathbf{y} \right> = 1 \text{ for any } \mathbf{y} \in F \right\}.
\end{equation}

A \textit{convex polyhedral set}\index{convex polyhedral set} is a reduced intersection of some finite number of halfspaces in $\mathbb{E}^d$, that has an interior. 
The facet of a convex polyhedral set $P$, corresponding to the halfspace $H^{-}_i$, is the intersection of $P$ with the corresponding hyperplane.
A $d$-dimensional \textit{convex polytope}\index{convex polytope} is a convex hull of a finite number of points in $\mathbb{E}^d$ that has an interior.  A convex body $P \in \mathcal{K}^d$ is a convex $d$-polytope if and only if it is a bounded convex polyhedral set with interior. 
A \textit{bounding hyperplane} of a polyhedral set $P$ is the affine hull of some facet of $P$.  We denote the set of bounding hyperplanes of a polyhedral set $P$ by $\mathcal{H}(P) = \left\{ \aff F : F \text{ is a facet of }P\right\}$, and we call it the \textit{bounding set} of $P$. 

\begin{remark}
	In this paper, we don't study the non-convex polytopes, so whenever we mention a polytope, it is convex. Same goes for the polyhedral sets.
\end{remark}

A point $\mathbf{x}$ in $\bd K, K \in \mathcal{K}^d$ is \textit{illuminated}\index{illuminated point} by a direction $\mathbf{u} \in \mathbb{S}^{d-1}$ if the ray $R_{\mathbf{u}}\left( \mathbf{x} \right) $ intersects $\inter K$.
A set of directions \textit{illuminates} $K$ if every point in $\bd K$ is illuminated by at least one direction from the set.
The smallest number of directions that illuminate $K$ is the \textit{illumination number}\index{illumination number} of $K$.
We denote the illumination number of a convex body $K \in \mathcal{K}^d$ by $\Ill(K)$.

\subsection{Illumination Conjecture}

We can now write down one of at least five equivalent statements of the Illumination Conjecture itself, stated by V. Boltyanskiy \cite{boltyanskiyProblemIlluminatingBoundary1960}:

\begin{conjecture}
A convex body $K \in \mathcal{K}^d$ can be illuminated by at most $2^{d}$ directions, even fewer if $K$ is not a linear image of a $d$-dimensional cube.
\end{conjecture}

Prior to Boltyanskiy, the equivalent versions of the Illumination Conjecture were stated by H. Hadwiger \cite{hadwigerUngelosteProblemeNr1957, hadwigerUngelosteProblemeNr1960}, and A. Markus together with I. Gohberg \cite{gohbergCertainProblemCovering1960}.

The Illumination Conjecture is conclusively proven for $K \in \mathcal{K}^{d}$ only in case $d = 2$ (see, for example,  Boltyanskiy and  Gohberg's \cite{boltyanskiyResultsProblemsCombinatorial1985} book). 
The best available three-dimensional estimate, $\Ill (K) \le 14: K \in \mathcal{K}^{3}$, is by A. Prymak \cite{prymakNewBoundHadwigers2023a}.
As of February 2026, the best general upper bound is by M. Campos, P. van Hintum, R. Morris, and M. Tiba \cite{camposHadwigersConjectureBourgain2024}:
\begin{equation}
Ill_d \le 4^d \exp \left( -c \left( \frac{d}{(\log d)^8} \right) \right) \text{ for some universal constant } c.
\end{equation}

The problem of illuminating convex polytopes is wide open even in $\mathbb{E}^{3}$. 
The Illumination Conjecture is proven for some particular classes of polytopes, such as 3-polytopes with affine symmetry  (K. Bezdek \cite{bezdekProblemIlluminationBoundary1991}), dual cyclic polytopes (K. Bezdek, T. Bisztriczky \cite{bezdekProofHadwigersCovering1997}), totally-sewn 4-polytopes (T. Bizstriczky, F. Fodor \cite{bisztriczkySeparationTheoremTotallysewn2015}), strongly monotypic polytopes (V. Bui \cite{buiCharacterizationStronglyMonotypic2021}), and zonotopes (H. Martini \cite{martiniResultsProblemsZonotopes1985}). 
This list is by no means exhaustive, for the detailed history of the Illumination Conjecture, and the related results, refer to the survey articles by K. Bezdek and M. Khan \cite{bezdekGeometryHomotheticCovering2018}, G. Fejes T\'{o}th \cite{fejestothFourClassicProblems2023}, and H. Martini \cite{martiniCombinatorialProblemsIllumination1999}.

\subsection{Primitive Polytopes and Positive Bases}

If, for some convex polytopes $P, P' \in \mathcal{K}^d: P \subsetneq P'$,  the bounding set $\mathcal{H}(P')$ is a strict subset of $\mathcal{H}(P)$, then $P'$ is a \textit{spill}\index{spill} \textit{of} $P$ \textit{through the hyperplanes} $\mathcal{H}(P) \setminus \mathcal{H}(P')$, or through the corresponding facets of $P$.
A convex polytope $P$ is \textit{primitive}, if every spill of $P$ is unbounded.\index{primitive polytope}
Primitive polytopes were first described by E. Steinitz \cite{steinitzUberDiejenigenKonvexen1909} and later studied by L. Fejes T\'{o}th \cite{fejestothPrimitivePolyhedra1962}, and P. McMullen, R. Schneider, and G. Shephard \cite{mcmullenMonotypicPolytopesTheir1974}.  

Our main result is the following theorem:

\begin{theorem}\label{thm:fleanill}
If $P \subset \mathbb{E}^d$ is a primitive polytope, then ${\text Ill} P \leq 2^d$, with equality attained if and only if $P$ is an image of a  $d$-dimensional cube under some invertible linear transformation.
\end{theorem}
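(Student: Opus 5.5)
Throughout, $n$ denotes the number of facets of $P$, and $\mathbf{u}_1,\dots,\mathbf{u}_n$ are the outer unit normals, so that $P=\bigcap_i H^-_{\mathbf{u}_i,\alpha_i}$. I would argue through this normal configuration.

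\textbf{Reformulation.} Removing a set $I$ of halfspaces leaves a bounded intersection exactly when the remaining normals still positively span $\mathbb{E}^d$, that is, when $\pos\{\mathbf{u}_j : j\notin I\}=\mathbb{E}^d$. So $P$ is primitive if and only if $\{\mathbf{u}_1,\dots,\mathbf{u}_n\}$ is a \emph{minimal} positive spanning set: no proper subset positively spans. Such sets are the minimal positive bases. By the classical structure theorem (McMullen--Schneider--Shephard, or Shephard's description of minimal positive bases), such a set splits into linearly independent blocks. Concretely, there is a direct sum decomposition $\mathbb{E}^d=V_1\oplus\cdots\oplus V_k$ with $\dim V_j=d_j$. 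The normals fall into groups $N_j$ of $d_j+1$ vectors each, and after projecting away from the earlier blocks the $j$-th group is a minimal positive basis of $V_j$, so it is affinely a simplex configuration. In the simplest (orthogonal-type) situation, $P$ is then affinely a direct sum (Cartesian product) $P_1\times\cdots\times P_k$ of $d_j$-simplices. The general case only adds a triangular dependency between blocks.

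\textbf{Illumination of products.} For a product $K\times L$, combining illuminating directions $(\mathbf{u},\mathbf{v})$ over all pairs illuminates it. This works because a boundary point $(\mathbf{x},\mathbf{y})$ has $\mathbf{x}\in\bd K$ or $\mathbf{y}\in\bd L$, and every direction also moves the interior coordinate into the interior for small steps. Hence $\Ill(K\times L)\le \Ill K\cdot\Ill L$. A $d_j$-simplex needs $d_j+1$ directions, so I get
\begin{equation*}
\Ill P\le \prod_j (d_j+1)\le \prod_j 2^{d_j}=2^d,
\end{equation*}
with equality only if every $d_j=1$. In that case $P$ is a product of $d$ segments, i.e.\ a linear (affine) image of a cube. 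In the non-product (triangular) case I would choose directions $\mathbf{w}$ whose components are adapted block by block, namely $\mathbf{w}=\sum_j \mathbf{w}_j$ with $\mathbf{w}_j$ among $d_j+1$ directions illuminating the $j$-th projected simplex. I would then show that, for each boundary point, the facets active there are all killed by some such $\mathbf{w}$. The test is: $\mathbf{w}$ illuminates $\mathbf{x}$ iff $\langle \mathbf{w},\mathbf{u}_i\rangle<0$ for all facets $i$ active at $\mathbf{x}$. The block-triangular structure lets me process blocks in order, scaling later blocks' components to dominate the cross terms coming from earlier blocks. For equality, the cube has $2^d$ vertices, no two of which can share a direction, so $\Ill=2^d$ there.

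\textbf{Main obstacle.} The key difficulty is the non-product case: making the triangular cross terms harmless. I would try a lexicographic scaling $\mathbf{w}=\sum_j \varepsilon^{j}\mathbf{w}_j$ with $\varepsilon$ small, ordered so that each normal's leading nonzero pairing comes from its own block. If the inner products do not line up this way, the fallback is to show that the illumination problem depends only on the face lattice and the sign pattern of normals. The face lattice of a primitive polytope is that of a product of simplices, and the bound would then transfer from that product.
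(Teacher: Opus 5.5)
Your argument is complete only when the normals split as a direct sum of minimal positive bases, i.e.\ when $P$ is affinely a product of simplices. In that case $\Ill P\le\prod_j(d_j+1)\le 2^d$ and your equality analysis are fine. The general case, which you flag as the main obstacle, is not proved, and both of your plans for it fail on the primitive polytope
\[
P=\{\mathbf{x}\in\mathbb{E}^3:\ x_1\le 1,\ x_2\le 1,\ -x_1-x_2\le 1,\ x_3\le 1,\ x_1+x_2-x_3\le 1\}.
\]
Its normals $\mathbf{b}_1=\mathbf{e}_1$, $\mathbf{b}_2=\mathbf{e}_2$, $\mathbf{b}_3=-\mathbf{e}_1-\mathbf{e}_2$, $\mathbf{b}_4=\mathbf{e}_3$, $\mathbf{b}_5=\mathbf{e}_1+\mathbf{e}_2-\mathbf{e}_3$ form a positive basis. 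This $P$ is a pyramid over a parallelogram, and its apex $(1,1,1)$ lies on the four facets $\mathbf{b}_1,\mathbf{b}_2,\mathbf{b}_4,\mathbf{b}_5$. So $P$ is not simple, and its face lattice is not that of any product of simplices; your fallback is false.

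The lexicographic plan fails on the same example. The only block decompositions are $\{\mathbf{b}_1,\mathbf{b}_2,\mathbf{b}_3\}\cup\{\mathbf{b}_4,\mathbf{b}_5\}$ and its image under the linear symmetry of $P$ swapping $\mathbf{b}_1,\mathbf{b}_2$ with $\mathbf{b}_4,\mathbf{b}_5$. In both, the whole second block is active at the apex. The reason is structural: a later block is positively dependent only after projection (here $\mathbf{b}_4+\mathbf{b}_5=-\mathbf{b}_3\neq\mathbf{o}$). So a vertex need not miss one normal from every block, which is what your count of $\prod_j(d_j+1)$ choices implicitly assumes.

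Concretely, take the first decomposition (the other is symmetric) and write $\mathbf{w}=(\omega_1,\omega_2,\omega_3)$, so $\langle\mathbf{w},\mathbf{b}_5\rangle=(\omega_1+\omega_2)-\omega_3$. Illuminating the apex forces $\omega_1+\omega_2<\omega_3<0$, so the cross term must dominate. Illuminating the vertex $(1,-2,-2)$, whose active normals are $\mathbf{b}_1,\mathbf{b}_3,\mathbf{b}_5$, forces $0<\omega_1+\omega_2<\omega_3$, so the own-block term must dominate. Hence no lexicographic scaling, in either order, illuminates $P$ with directions of your form, even though $\Ill P=5$.

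The paper never uses the global block structure; it inducts on $d$. If every single-facet spill $P'_i$ has recession dimension $r_i=1$, it shows $P$ is an affine cube. Otherwise it takes a facet $F_i$ with maximal $r_i>1$. Let $G$ be the face of $\conv(\mathcal{P}\setminus\{\mathbf{b}_i\})$ containing $\mathbf{o}$ in its relative interior; then $\verts G$ is a positive basis of a subspace of dimension $f_i=d-r_i\le d-2$. The induction hypothesis there, combined with the separation lemma, illuminates all vertices off $F_i$ with $2^{f_i}$ directions. At most $2^{d-1}$ more directions handle $F_i$, so $\Ill P\le 2^{f_i}+2^{d-1}<2^d$. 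This additive, facet-by-facet count never has to reconcile coupled blocks, which is exactly where your multiplicative count breaks. To rescue your route you would need a new argument that lets the dominant block vary from vertex to vertex while still bounding the total number of directions.
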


 \begin{remark}\label{rmk:tinker}
Our motivation to study the primitive polytopes arose from tinkering. To illuminate a convex polytope, it is necessary and sufficient to illuminate its vertices. That's why, at first, we tried to describe the effect that removing a vertex of a convex polytope has on the illumination number. However, removing vertices one by one inevitably ends on a simplex, and the illumination number of a simplex in $\mathbb{E}^d$ is only $d+1$. Moreover, removing a vertex may increase the illumination number, or may decrease it, and we aimed for a somewhat monotonous process, some transformation that, hopefully, wouldn't decrease the illumination number.
A convex polytope is the convex hull of a finite set of points with interior. However, it may also be defined as a bounded intersection of a finite number of halfspaces with interior. Omitting halfspaces immediately looks more promising than removing vertices: you end on a primitive polytope, and there's several combinatorially different kinds in $\mathbb{E}^d, d \ge 2$. Moreover, a cube is primitive. We proved the Illumination Conjecture for the primitive polytopes. If we could find some process, which takes any convex polytope to some primitive polytope without decreasing the illumination number, then the Illumination Conjecture for the convex polytopes would be proven. Unfortunately, "omitting halfspaces" also may increase the illumination number, or may decrease it, there's no monotonous effect on the illumination number.

 \end{remark}

Let $P$ be the bounded intersection of some finite number of negative halfspaces: $P = H^{-}_1 \cap \ldots \cap H^{-}_n$.  For each $i: 1 \le i \le n$, we denote the respective facet $P \cap H_i$ by $F_i$. Let $\left\{ \mathbf{b}_i  :  1 \le i \le n\right\}$  be the set of outer normals of $P$'s facets. We scale each $b_i$ so that $H_i = H_{\mathbf{b}_i, 1}$. In this way, this set of outer normals is also the set of vertices of the polar polytope $Q = P^{o}$.

\begin{remark}
	Unless specified otherwise, every convex polytope and every polyhedral set contains the origin in its interior.
\end{remark}

We say a vector set $\mathcal{P} = \left\{ \mathbf{b}_1, \ldots, \mathbf{b}_n \right\} $ \textit{positively spans} $\mathbb{E}^{d}$, if $\pos \mathcal{P} = \mathbb{E}^{d}$. A vector set $\mathcal{P}$ is \textit{positively independent} if $\mathbf{b}_i \notin \pos \left\{ \mathcal{P} \setminus \mathbf{b}_i \right\}$ for any $i: 1 \le i \le n$. The vector set $\mathcal{P}$ is a \textit{positive basis} in $\mathbb{E}^{d}$ if $\mathcal{P}$ positively spans $\mathbb{E}^{d}$, and is positively independent.  A positive basis of $\mathbb{E}^{d}$ is \textit{minimal} if it has exactly $d+1$ vectors. A positive basis is \textit{maximal} if it has $2d$ vectors, which is only possible if the positive basis consists of $d$ linearly independent pairs of antipodal vectors.  Given a positive basis $\mathcal{P}$ of $\mathbb{E}^{d}$, a \textit{spanned subspace} of $\mathbb{E}^{d}$ is a linear subspace $S \subset \mathbb{E}^{d}$ such that some proper subset of $\mathcal{P}$ is a positive basis for $S$. If this proper subset is a minimal basis of $S$, then $S$ is a \textit{minimal subspace}.  For more information on the positive bases, see C. Davis' \cite{davisTheoryPositiveLinear1954} paper, where he introduced the concept. Positive bases and primitive polytopes are closely connected:

\begin{theorem}\label{thm:facleanposbas}[McMullen, Schneider, Shephard \cite{mcmullenMonotypicPolytopesTheir1974}, Theorem 3, parts 1,3]
	 A convex polytope $P \subset \mathbb{E}^d$ is primitive if and only if the set of outer normals to the facets of $P$ forms a positive basis in $\mathbb{E}^d$.
\end{theorem}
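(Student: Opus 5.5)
The statement to prove is Theorem~\ref{thm:facleanposbas}: $P$ is primitive if and only if its outer facet normals $\mathcal{P}=\{\mathbf{b}_1,\ldots,\mathbf{b}_n\}$ form a positive basis. The plan is to translate both directions into the same fact about polyhedral sets. Normalise $P=\bigcap_{i} H^{-}_{\mathbf{b}_i,1}$ with $\mathbf{o}\in\inter P$, and use the following standard criterion. For a subset $I\subseteq\{1,\ldots,n\}$, the set $P_I=\bigcap_{i\in I}H^-_{\mathbf{b}_i,1}$ contains $\mathbf{o}$ in its interior. Its recession cone is $\rec P_I=\{\mathbf{x}:\langle \mathbf{x},\mathbf{b}_i\rangle\le 0,\ i\in I\}=(\pos\{\mathbf{b}_i:i\in I\})^{o}$. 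Hence $P_I$ is bounded if and only if this cone is $\{\mathbf{o}\}$, i.e.\ if and only if $\pos\{\mathbf{b}_i: i\in I\}=\mathbb{E}^d$ (by the bipolar theorem for closed convex cones).

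Granting the criterion, the steps are as follows.

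First, positive spanning. Boundedness of $P$ itself ($I$ = all indices) is equivalent to $\pos\mathcal{P}=\mathbb{E}^d$. So for any polytope the normals positively span.

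Second, we interpret spills. A spill through a nonempty set of facets is exactly $P_I$ for a proper subset $I$. The inclusion $P\subsetneq P_I$ and the strict inclusion of bounding sets hold automatically, because each $H_{\mathbf{b}_j,1}$ with $j\notin I$ is facet-defining: removing it strictly enlarges the set. One caveat is that the bounding set of $P_I$ might be smaller than $\{H_i:i\in I\}$ if some halfspaces become redundant. In that case one can shrink $I$ to its irredundant part; this only makes boundedness harder, so the characterization is unaffected. Primitivity therefore says that $\pos\{\mathbf{b}_i:i\in I\}\ne\mathbb{E}^d$ for every proper $I$.

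Third, we show equivalence with positive independence given spanning. If some $\mathbf{b}_j\in\pos(\mathcal{P}\setminus\mathbf{b}_j)$, then $\pos(\mathcal{P}\setminus \mathbf{b}_j)=\pos\mathcal{P}=\mathbb{E}^d$, so dropping facet $j$ gives a bounded spill and $P$ is not primitive. Conversely, suppose a proper subset $I$ has $\pos\{\mathbf{b}_i:i\in I\}=\mathbb{E}^d$. Take $j\notin I$. Then $\mathbf{b}_j\in\pos\{\mathbf{b}_i:i\in I\}\subseteq\pos(\mathcal{P}\setminus\mathbf{b}_j)$, so $\mathcal{P}$ is positively dependent. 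This proves both directions.

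The main obstacle is the recession-cone/polarity step: showing $\{\mathbf{x}:\langle\mathbf{x},\mathbf{b}_i\rangle\le 0\ \forall i\in I\}=\{\mathbf{o}\}$ exactly when the $\mathbf{b}_i$ positively span. I would argue it by separation. If $\pos\{\mathbf{b}_i\}\ne\mathbb{E}^d$, then this closed convex cone misses some point. A hyperplane through $\mathbf{o}$ then separates that point from the cone, and its normal $\mathbf{x}\neq\mathbf{o}$ satisfies $\langle\mathbf{x},\mathbf{b}_i\rangle\le 0$ for all $i$. Then the whole ray $R_{\mathbf{x}}(\mathbf{o})$ lies in $P_I$, so $P_I$ is unbounded. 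If instead the $\mathbf{b}_i$ positively span, then $\mathbf{x}=\sum\lambda_i\mathbf{b}_i$ with $\lambda_i\ge0$, giving $|\mathbf{x}|^2\le 0$, so the cone is trivial. Finally, a closed convex set containing no ray is bounded.
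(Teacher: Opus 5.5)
Your proof is correct and complete. The paper does not prove this statement itself; it cites McMullen, Schneider and Shephard. So there is no internal argument to compare against, and yours is a self-contained replacement for the citation.

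Your key step is that $\rec\bigl(\bigcap_{i\in I}H^-_{\mathbf{b}_i,1}\bigr)=\{\mathbf{x}:\langle\mathbf{x},\mathbf{b}_i\rangle\le 0,\ i\in I\}$ is trivial exactly when $\{\mathbf{b}_i:i\in I\}$ positively spans $\mathbb{E}^d$. The paper later uses this same duality without comment: in Claim~\ref{clm:flean_2facet}, where it uses that $\mathcal{P}\setminus\{\mathbf{b}_2\}$ does not positively span, and in the recession-cone description in Claim~\ref{clm:flean_rayaltydim}.

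The citation buys brevity. Your route buys two things. First, it isolates the intermediate statement ``$P$ is primitive iff no proper subset of its facet normals positively spans $\mathbb{E}^d$'', which is the form the later claims actually use. Second, it makes precise the paper's loose definition of spill: a spill is a polyhedral set rather than a polytope, and it may carry redundant halfspaces.

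One wording fix. Discarding redundant halfspaces from $I$ does not make boundedness ``harder''; it leaves the set $P_I$ unchanged. What your characterization needs is only this:
\begin{itemize}
\item every spill of $P$ is intersection of its own facet halfspaces;
\item each of these is some $H^-_{\mathbf{b}_i,1}$, because it contains $P\ni\mathbf{o}$;
\item so every spill equals $P_I$ for some proper $I$;
\item conversely, every such $P_I$ is a spill, as you showed.
\end{itemize}
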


\begin{corollary}\label{cor:flean_crit}
	A  convex polytope $P \subset \mathbb{E}^d, \mathbf{o} \in \inter P$ is primitive if and only if the vertices of the polar polytope $P^{o}$ form a positive basis in $\mathbb{E}^d$.
\end{corollary}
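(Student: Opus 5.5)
The corollary follows directly from Theorem~\ref{thm:facleanposbas}, once we identify the vertex set of $P^{o}$ with the set of outer facet normals of $P$, scaled as in the setup above. First I will fix a representation $P = H^{-}_{\mathbf{b}_1,1}\cap\ldots\cap H^{-}_{\mathbf{b}_n,1}$ in which every halfspace defines a facet. This is possible because $\mathbf{o}\in\inter P$, so each bounding hyperplane misses the origin and can be written as $H_{\mathbf{b}_i,1}$ with a uniquely determined $\mathbf{b}_i$, which is a positive multiple of the unit outer normal of $F_i$.

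The key step is to show that $\verts P^{o} = \{\mathbf{b}_1,\ldots,\mathbf{b}_n\}$. By definition $P^{o} = \{\mathbf{x} : \langle \mathbf{x},\mathbf{y}\rangle \le 1 \ \forall \mathbf{y}\in P\}$. Since $P = (\conv\{\mathbf{b}_i\})^{o}$ and $\mathbf{o}\in\inter P$, the bipolar theorem gives $P^{o} = \conv\{\mathbf{b}_1,\ldots,\mathbf{b}_n\}$, which is bounded and contains $\mathbf{o}$ in its interior. So every vertex of $P^{o}$ is some $\mathbf{b}_i$. Conversely, each $\mathbf{b}_i$ is a vertex. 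The facet $F_i$ is $(d-1)$-dimensional, so its affine hull $H_{\mathbf{b}_i,1}$ is spanned by points $\mathbf{y}\in F_i$. Hence the only point $\mathbf{x}\in P^{o}$ with $\langle \mathbf{x},\mathbf{y}\rangle=1$ for all $\mathbf{y}\in F_i$ is $\mathbf{b}_i$; in other words, $\{\mathbf{b}_i\}=F_i^{*}$ is a face of $P^{o}$. Equivalently, if $\mathbf{b}_i$ were a convex combination of the other $\mathbf{b}_j$, dropping $H_i$ would not change $P$, contradicting the irredundancy of the representation.

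With this identification, the set of outer normals of the facets of $P$ is, up to positive scaling of each vector, exactly the vertex set of $P^{o}$. Positive spanning and positive independence are both invariant under rescaling each vector by a positive scalar, because $\pos$ of a set does not change under such rescaling. Applying Theorem~\ref{thm:facleanposbas} then gives the equivalence in both directions.

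The only step requiring care is the vertex identification. The point is that it uses the irredundancy of the facet description: a redundant halfspace would give a non-vertex normal, whereas the bounding set $\mathcal{H}(P)$ by definition uses only facets.
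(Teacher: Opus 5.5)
Your proposal is correct and follows the paper's route. The paper never writes out a proof: it scales the normals so that $H_i = H_{\mathbf{b}_i,1}$, asserts that these $\mathbf{b}_i$ are the vertices of $P^{o}$, and reads the corollary off Theorem~\ref{thm:facleanposbas}. Your argument supplies that vertex identification in detail. One small precision: the bipolar theorem gives $P^{o}=\conv\{\mathbf{o},\mathbf{b}_1,\ldots,\mathbf{b}_n\}$, and it is the boundedness of $P$, not $\mathbf{o}\in\inter P$, that places $\mathbf{o}$ inside $\conv\{\mathbf{b}_i\}$ so you can drop it.
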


\subsection{Two Kinds of Primitive Polytopes}

As mentioned before, any convex polytope contains $\mathbf{o}$ in its interior, unless specified otherwise.  Below are the properties of the polar faces of convex polytopes that we use in this chapter.  Polarity reverses polyhedral face inclusion: if $F_1 \subset  F_2$ holds for the faces $F_1, F_2$ of a convex polytope $P$, then $F_2^{*} \subset F_1^{*}$ holds for the respective polar faces of $P^{o}$.
The sum of dimensions of a face and its polar face does not depend on the choice of the face: $\text{ dim }F + \text{ dim }F^{*} = d-1$.
In particular, the polar face of a vertex of $P$ is a facet of $P^{o}$, and vice versa.

\begin{remark}\label{rmk:fleancriterion}
	Initially, we hypothesized that for every non-primitive polytope $P \subset  \mathbb{E}^d$, there is a sequence of polytopes $P = P'_0 \subset P'_1 \subset P'_2 \subset  \ldots \subset  P'_l$, such that:
	\begin{enumerate}
		\item Every $P'_i, 1 \le i \le l$ is a spill of $P'_{i-1}$ through a single facet.
			
		\item For every $i: 1 \le i \le l$, inequality $\Ill(P'_i) \ge \Ill(P'_{i-1})$ holds.
		\item $P'_l$ is primitive.
	\end{enumerate}

	If this hypothesis held, proving the Illumination Conjecture for the primitive polytopes would directly imply the Illumination Conjecture for all the convex polytopes. Unfortunately, this hypothesis is false.

\end{remark}

\begin{claim}
	There exists a convex polytope $P \in  \mathcal{K}^3$, such that any spill of $P$ through a single facet has greater illumination number than  $P$.
\end{claim}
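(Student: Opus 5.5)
The plan is to prove the claim by an explicit construction together with a finite verification, organised around a purely combinatorial certificate for the illumination number. For a vertex $\mathbf{v}$ of a polytope $K$, the set of directions illuminating $\mathbf{v}$ is the open cone $C(\mathbf{v}) = \inter \pos \left( K - \mathbf{v} \right)$ intersected with $\mathbb{S}^{d-1}$. Since illuminating a polytope is equivalent to illuminating its vertices (Remark~\ref{rmk:tinker}), $\Ill(K)$ equals the least number of points of $\mathbb{S}^{2}$ hitting all the cones $C(\mathbf{v})$. This yields two tools:
\begin{enumerate}
\item an \emph{upper bound} $\Ill(P) \le m$, certified by an explicit list of $m$ directions together with, for each vertex, one direction lying in its cone;
\item a \emph{lower bound} $\Ill(P') \ge m+1$, certified by $m+1$ vertices of $P'$ whose cones are pairwise disjoint.
\end{enumerate}
Pairwise disjointness of two polyhedral cones is itself certified by a separating hyperplane through $\mathbf{o}$, so every check is finite and linear.

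Only single-facet spills that are again polytopes matter. By Theorem~\ref{thm:facleanposbas}, the argument behind it, and Corollary~\ref{cor:flean_crit}, removing the facet with outer normal $\mathbf{b}_i$ gives a bounded set exactly when $\{\mathbf{b}_j : j \ne i\}$ still positively spans $\mathbb{E}^3$. So the first step is to list the facets whose removal keeps positive spanning, and only for those to compute the spill $P'_i$ (its vertex set changes only near $F_i$). I will choose $P$ with a large symmetry group, so that the facets fall into only one or two orbits and only one or two spills need checking.

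The candidate must have small $\Ill(P)$ while every bounded spill has large illumination number. This rules out a cube with a single corner cut off: spilling through an adjacent square facet produces a combinatorial cube that is not a parallelotope, which has $\Ill \le 7$ and so does not exceed $\Ill(P)$. I would therefore search among centrally or cubically symmetric polytopes in which every removable facet is a small ``cap'' whose removal restores a vertex with a narrow illumination cone. Such a restored vertex, together with its neighbours, should supply an extra vertex with a cone disjoint from the others. My first candidate is the cube $[-1,1]^3$ with all eight corners truncated by planes $\pm x \pm y \pm z = 3-\varepsilon$, perhaps also with the three pairs of square facets pushed so that their removal is unbounded. I would compute the spills and both bounds for this candidate. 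If it fails, I would run a small computer search over symmetric truncations of the cube, octahedron and prisms, using the linear-programming certificates above.

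The main obstacle is the lower-bound half: proving $\Ill(P'_i) > \Ill(P)$ for each spill requires locating a large antichain of vertices with pairwise disjoint illumination cones. This must be balanced against a matching small illuminating set for $P$ itself. The delicate point is choosing the truncation depths so that both inequalities hold simultaneously. I expect this tuning, rather than any individual verification, to be the hard part.
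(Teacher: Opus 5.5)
\paragraph{Main gap: the inequality goes the wrong way.} You are proving the inequality exactly as printed, but that is neither the purpose of the claim nor what the paper proves. The claim is meant to refute the hypothesis of Remark~\ref{rmk:fleancriterion}, namely that some chain of single-facet spills never decreases $\Ill$. A counterexample to that hypothesis needs a polytope all of whose single-facet spills have strictly \emph{smaller} illumination number, and the paper's proof establishes exactly that; ``greater'' in the statement is a slip. Taken literally, the statement is even satisfied vacuously by a tetrahedron, or by any primitive polytope, because such a polytope has no bounded spills.

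The paper's witness is the triangular bipyramid $P$ formed by two regular tetrahedra sharing a facet:
\begin{itemize}
\item its five vertices are pairwise antipodal, so no direction illuminates two of them and $\Ill(P)=5$;
\item its six facets form one symmetry orbit, so all single-facet spills are congruent;
\item each such spill is a pyramid over a quadrilateral that is not a parallelogram, with apex at the equatorial vertex not on the removed facet, so it has illumination number $4$.
\end{itemize}
Your certificate framework proves this directly once the roles are swapped. Five pairwise disjoint vertex cones give the lower bound for $P$, and four explicit directions give the upper bound for the spill.

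\paragraph{Secondary gap: no witness is produced.} Even under the literal, non-vacuous reading, the proposal is not a proof. No polytope is exhibited, and the decisive step is deferred to an unspecified search.

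Your first candidate also fails. Take the cube $[-1,1]^3$ with every corner cut by $\left<\sigma,\mathbf{x}\right>=3-\varepsilon$, where $\sigma\in\{\pm1\}^3$. The vertex $(\sigma_1,\sigma_2,\sigma_3(1-\varepsilon))$ is illuminated by $\mathbf{u}$ exactly when $\sigma_1u_1<0$, $\sigma_2u_2<0$ and $\left<\sigma,\mathbf{u}\right><0$. Hence the direction $-\tau$ illuminates both endpoints of each of the three shortened edges through the corner $\tau$. Let $\tau$ run over one of the two inscribed tetrahedra of cube corners; every cube edge contains exactly one of them, so $\Ill(P)=4$. Removing a triangular facet restores a corner $\tau^*$. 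Using the tetrahedron of corners that contains $\tau^*$, the same four directions still illuminate the spill, so its illumination number is again $4$, not larger.

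These tangent cones do not depend on $\varepsilon$ or on the individual cut depths, so no tuning of depths rescues the candidate. Pushing the square facets cannot make their removal unbounded either. By your own positive-spanning criterion, boundedness of a spill depends only on the remaining normals, not on the offsets.
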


\begin{proof}
	\begin{figure}[h]
		\centering
\def\svgwidth{0.5 \columnwidth}
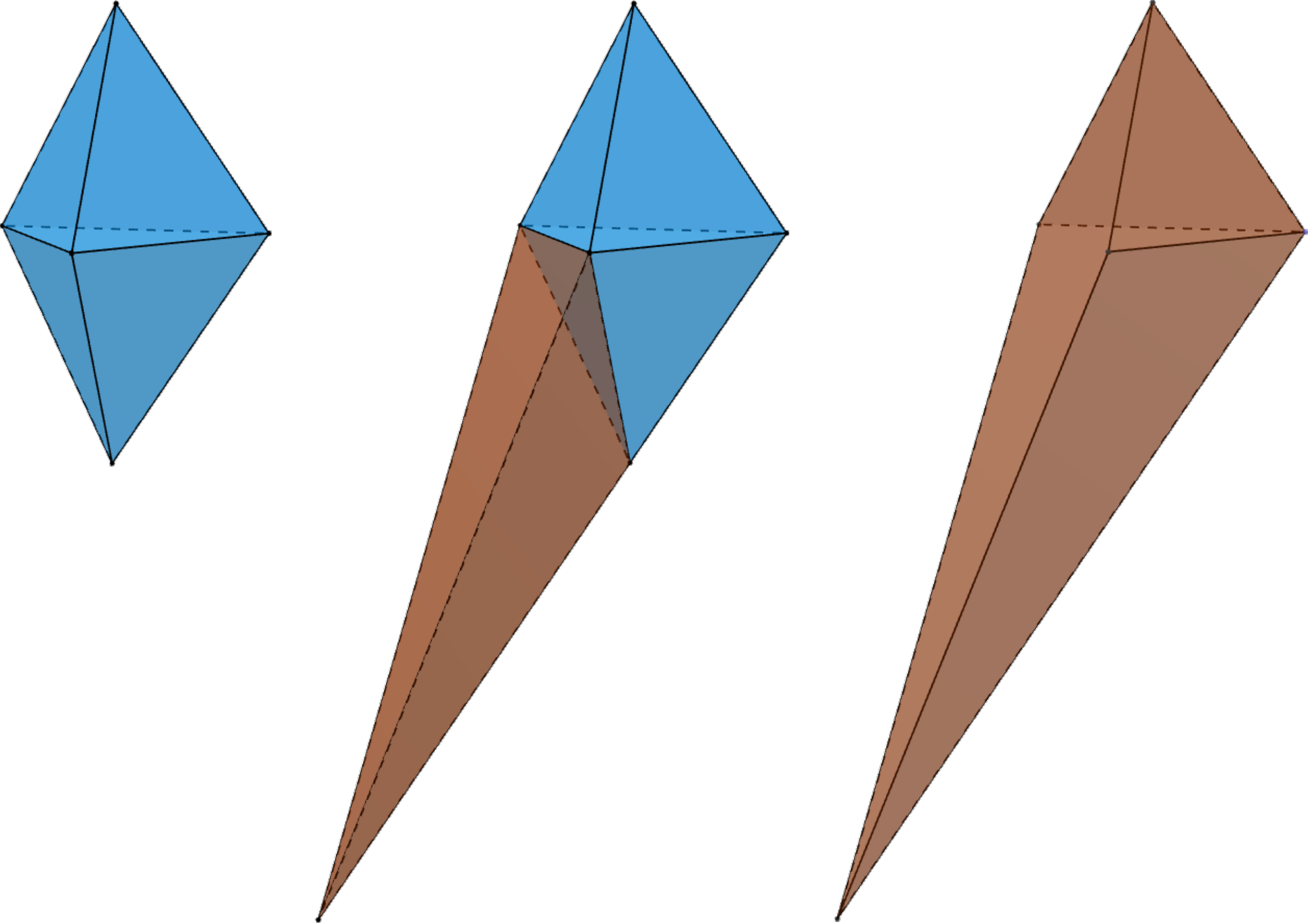

		\caption{Polytope $P$ and its spill through a single facet.}
		\label{fig:fleanbipyr}
	\end{figure}

	Let $P$ be the union of two regular 3-simplices that share a facet  (Figure \ref{fig:fleanbipyr}). No two vertices of $P$ can be illuminated by the same direction, so $\Ill(P) = 5$. For any pair of facets $F_1, F_2 \subset  \bd P$, there is a symmetry of $P$ that takes $F_1$ to $F_2$.  Therefore, all the spills of $P$ through a single facet are congruent. Namely, the spill of $P$ through a single facet is the pyramid  over a quadrilateral, that is not a parallelogram. Thus, $\Ill(P') = 4$.

\end{proof}

The main result of this chapter is Theorem \ref{thm:fleanill}, the Illumination Conjecture for the primitive polytopes. Moreover, we pose the following conjecture:

\begin{conjecture}\label{conj:fleanverts}
	A primitive polytope in $\mathbb{E}^d, d\ge 4$ has at most $2^d$ vertices, even fewer if it isn't an affine $d$-cube.
\end{conjecture}
\begin{remark}
	Conjecture \ref{conj:fleanverts} is trivial in $\mathbb{E}^{2}$, since there are only two classes of primitive polygons: triangles and parallelograms. Primitive polytopes in $\mathbb{E}^{3}$ were enumerated by  Steinitz \cite{steinitzUberDiejenigenKonvexen1909}. Later, L. Fejes T\'{o}th \cite{fejestothPrimitivePolyhedra1962} came up with a shorter proof. 
\end{remark}

\begin{remark}
	A number of vertices of a primitive polytope $P \subset \mathbb{E}^d$ is equal to the number of facets of $\conv \mathcal{P}$, a convex hull of a corresponding positive basis. A positive basis of $\mathbb{E}^d$ can have at most $2d$ vectors. According to McMullen's \cite{mcmullenUpperboundConjectureConvex1971} upper bound theorem, for a given dimension $d\ge 2$, the polytope with the greatest number of facets is  the cyclic polytope.  The number of facets of a cyclic polytope with $n$ vertices in $\mathbb{E}^d$, denoted by $f_{d-1}\left( C(d,n) \right) $ is expressed by the following formula (see, for example, K. Fukuda's \cite{fukudaFrequentlyAskedQuestions2004} survey article):

	\begin{equation}\label{eqn:uppboundfacet}
		f_{d-1}(C(d,n))= \binom{n-\left\lfloor \frac{d+1}{2} \right\rfloor}{n-d} + \binom{n-\left\lfloor \frac{d+2}{2} \right\rfloor}{n-d}
	\end{equation}

	If we plug in $n = 2d$ into the Formula \ref{eqn:uppboundfacet}, use Stirling's approximation $n! \sim \sqrt{2\pi n}\left( \frac{n}{e} \right)^n$ and evaluate equation \ref{eqn:uppboundfacet}, order of magnitude of the result is approximately $2.5^{d}$.
	Therefore, the upper bound theorem does not directly imply that any primitive polytope in $\mathbb{E}^d$ has fewer than $2^{d}$ vertices.
\end{remark}

\subsection{Preliminary Claims}

	A  \textit{convex cone}\index{convex cone}
	$C \subset \mathbb{E}^d$ is a convex set of rays that start at the origin. A restriction of a convex cone to $\mathbb{S}^{d-1}$ is a \textit{spherical cone}\index{spherical cone}. For a convex set $S \subset \mathbb{E}^d$, the \textit{recession cone} of $S$, denoted by $\rec S$, is the set of all the rays $R_{\mathbf{u}}\left( \mathbf{o} \right), \mathbf{u} \in \mathbb{S}^{d-1}$ such that for
any $\mathbf{y} \in S$, the ray $R_{\mathbf{u}}\left( \mathbf{y} \right) $ is contained in $S$.  A \textit{normal cone} of a convex body $K$ at a boundary subset $S \subset  \bd K$ is the set of all the outer normals to $K$ at the points in $S$. We denote the normal cone of $K$ at $S$ by $S^{\perp}(K)$. If $S = \left\{ \mathbf{x} \right\} $ consists of a single point, then we omit the curly brackets, and denote the corresponding normal cone by $\mathbf{x}^{\perp}(K)$. When there is no ambiguity about the convex body, we will write $S^{\perp}$. 
If $K$ is a convex polytope, and $\mathbf{x}$ is a point in the relative interior of a face $F \subset  \bd K$, then $\dim(F) + \dim \mathbf{x}^{\perp}(K) = d$. The set of vertices of a convex polytope $P$ is denoted $\verts P$. A primitive polytope $P$ is \textit{cylindrical} if the corresponding positive basis $\mathcal{P} = \verts \left(   P^{o}\right)$ has a spanned $(d-1)$-subspace.
\index{cylindrical primitive polytope}

\begin{claim}\label{clm:fleanframe}
	Let $F_1 \subset \bd P$ be a facet of a primitive polytope $P \subset \mathbb{E}^d$, and let $H_1$ be the corresponding support hyperplane of $P$. Moreover, let $F_2, \ldots, F_k$ be the facets of $P$ that share a $(d-2)$-face with $F_1$. Consider the set $Q_1 =  H^{-}_2 \cap \ldots \cap H^{-}_k$. Let $Q_1^{-} = Q_1 \cap H_1^{-}$ and $Q_1^{+} = Q_1 \cap H^{+}_1$ (see Figure \ref{fig:fleanpmframe}).
	If $Q_1^{+}$ is unbounded, then the recession cone $\rec Q$ is a line.
	
\end{claim}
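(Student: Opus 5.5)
The plan is to work with the recession cone directly. Since $Q_1 = H^{-}_2 \cap \ldots \cap H^{-}_k$ with $H_i = H_{\mathbf{b}_i,1}$, we have
\begin{equation*}
\rec Q_1 = \left\{ \mathbf{u} : \left< \mathbf{u}, \mathbf{b}_i \right> \le 0, \ 2 \le i \le k \right\}.
\end{equation*}
Moreover $\rec Q_1^{\pm} = \rec Q_1 \cap \left\{ \mathbf{u} : \pm \left< \mathbf{u}, \mathbf{b}_1 \right> \ge 0 \right\}$. The argument then runs in three steps.

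\textbf{Step 1.} The facet $F_1$ is a $(d-1)$-polytope in $H_1$. Its facets are exactly the $(d-2)$-faces $F_1 \cap F_i$, $2 \le i \le k$. Hence $H_1 \cap Q_1 = F_1$ is bounded. Consequently no nonzero $\mathbf{u} \in \rec Q_1$ satisfies $\left< \mathbf{u}, \mathbf{b}_1 \right> = 0$.

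\textbf{Step 2.} I next show that $\rec Q_1$ is either a line or lies, apart from $\mathbf{o}$, strictly on one side of $\mathbf{b}_1^{\perp}$. Suppose $\mathbf{u}, \mathbf{w} \in \rec Q_1 \setminus \{\mathbf{o}\}$ with $\left< \mathbf{u}, \mathbf{b}_1 \right> > 0 > \left< \mathbf{w}, \mathbf{b}_1 \right>$. Some positive combination $\alpha \mathbf{u} + \beta \mathbf{w}$ lies in $\mathbf{b}_1^{\perp} \cap \rec Q_1$. By Step 1 this combination is $\mathbf{o}$, so $\mathbf{w}$ is a negative multiple of $\mathbf{u}$.

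Now let $\mathbf{u}_1, \mathbf{u}_2$ be two non-collinear elements of the positive side. They can be paired against the antipode of $\mathbf{u}$ only if both are multiples of $\mathbf{u}$, which is a contradiction. So whenever $\rec Q_1$ meets both open sides, it is exactly the line $L_{\mathbf{u}}(\mathbf{o})$. Since $Q_1^{+}$ is unbounded, $\rec Q_1$ contains some $\mathbf{u}$ with $\left< \mathbf{u}, \mathbf{b}_1 \right> > 0$ (by Step 1 the inequality is strict). It therefore remains to exclude the case $\rec Q_1 \setminus \{\mathbf{o}\} \subset \left\{ \left< \cdot, \mathbf{b}_1 \right> > 0 \right\}$.

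\textbf{Step 3.} Here I use primitivity via Theorem \ref{thm:facleanposbas}. In the excluded case, $\rec (Q_1 \cap H_1^{-}) = \{\mathbf{o}\}$, so $P' = Q_1 \cap H_1^{-}$ is a bounded polytope containing $P$. Its bounding set is contained in $\{H_1, \ldots, H_k\} \subset \mathcal{H}(P)$. If some facet of $P$ is not adjacent to $F_1$ (that is, $k < n$), then $P'$ is a bounded spill of $P$ through the non-adjacent facets, contradicting primitivity. Hence $\rec Q_1$ is a line.

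I expect the main obstacle to be the degenerate situation $k = n$, in which every facet is adjacent to $F_1$. Then $P' = P$, and $Q_1$ is the spill of $P$ through $F_1$ alone. For a simplex this spill is a pointed $d$-dimensional cone, which is not a line. So in this case the statement as written cannot be deduced, and the argument must use that $\mathcal{H}(P) \setminus \{H_1, \ldots, H_k\} \neq \emptyset$.

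My first attempt would be to check how Figure \ref{fig:fleanpmframe} and the later use of the claim restrict $F_1$. If the claim is only applied to facets that have a non-adjacent facet (as in the non-simplicial, cube-like situation, where $-\mathbf{b}_1$ belongs to the basis), I would add that hypothesis explicitly and run Steps 1–3 as above. Otherwise, in the case $k = n$ I would record that $\rec Q_1$ is the pointed cone polar to $\pos\{\mathbf{b}_2, \ldots, \mathbf{b}_n\}$, rather than a line.
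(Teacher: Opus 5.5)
Your Steps 1--3 are essentially the paper's proof. The paper likewise takes a ray of $Q_1^{+}$ and a ray of $Q_1^{-}$ issuing from a point of $\relint F_1$, forms the positive combination of their directions that is parallel to $H_1$, and uses the boundedness of $F_1 = Q_1 \cap H_1$ to force the two directions to be antipodal; primitivity is used only to guarantee that $Q_1^{-}$ is unbounded.

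Your worry about $k=n$ is justified, and it points to a defect in the statement rather than in your argument. The paper's opening line ``since $P$ is primitive, $Q_1^{-}$ is unbounded'' tacitly assumes that some facet is not $(d-2)$-adjacent to $F_1$. When $k=n$ one has $Q_1^{-}=P$, and $\rec Q_1$ is a pointed cone: this happens for any facet of a simplex, or for a lateral facet of a triangular prism. So the claim holds exactly under the extra hypothesis $k<n$ that you propose to add.
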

\begin{proof}

	Since $P$ is primitive, $Q_1^-$ is unbounded, and contains at least one ray. 
	Pick an arbitrary point $\mathbf{r} \in \relint F_1$. 
	If a closed convex set $S$ contains the ray $R_{\mathbf{u}}\left(\mathbf{x} \right) $ for some $\mathbf{x} \in  S, \mathbf{u} \in  \mathbb{S}^{d-1}$, then, for any $\mathbf{y} \in S$, the ray $R_{\mathbf{u}}\left( \mathbf{y} \right) $ also lies in $S$ (see, for example, Schneider's \cite{schneiderConvexBodiesBrunn2013} book).  Therefore, $Q_1^+$ contains at least one ray that originates in $\mathbf{r}$, let this ray be $R_{\mathbf{u}}\left( \mathbf{r} \right), \mathbf{u} \in \mathbb{S}^{d-1}$. Since $F_1 = Q_1^+ \cap H_1$ is bounded, the point $\mathbf{r} + \mathbf{u}$ lies outside the hyperplane $H_1$.

	\begin{figure}[h]
		\centering
\def\svgwidth{0.25 \columnwidth}
\begingroup%
  \makeatletter%
  \providecommand\color[2][]{%
    \errmessage{(Inkscape) Color is used for the text in Inkscape, but the package 'color.sty' is not loaded}%
    \renewcommand\color[2][]{}%
  }%
  \providecommand\transparent[1]{%
    \errmessage{(Inkscape) Transparency is used (non-zero) for the text in Inkscape, but the package 'transparent.sty' is not loaded}%
    \renewcommand\transparent[1]{}%
  }%
  \providecommand\rotatebox[2]{#2}%
  \newcommand*\fsize{\dimexpr\f@size pt\relax}%
  \newcommand*\lineheight[1]{\fontsize{\fsize}{#1\fsize}\selectfont}%
  \ifx\svgwidth\undefined%
    \setlength{\unitlength}{413.6338698bp}%
    \ifx\svgscale\undefined%
      \relax%
    \else%
      \setlength{\unitlength}{\unitlength * \real{\svgscale}}%
    \fi%
  \else%
    \setlength{\unitlength}{\svgwidth}%
  \fi%
  \global\let\svgwidth\undefined%
  \global\let\svgscale\undefined%
  \makeatother%
  \begin{picture}(1,1.73564749)%
    \lineheight{1}%
    \setlength\tabcolsep{0pt}%
    \put(0,0){\includegraphics[width=\unitlength,page=1]{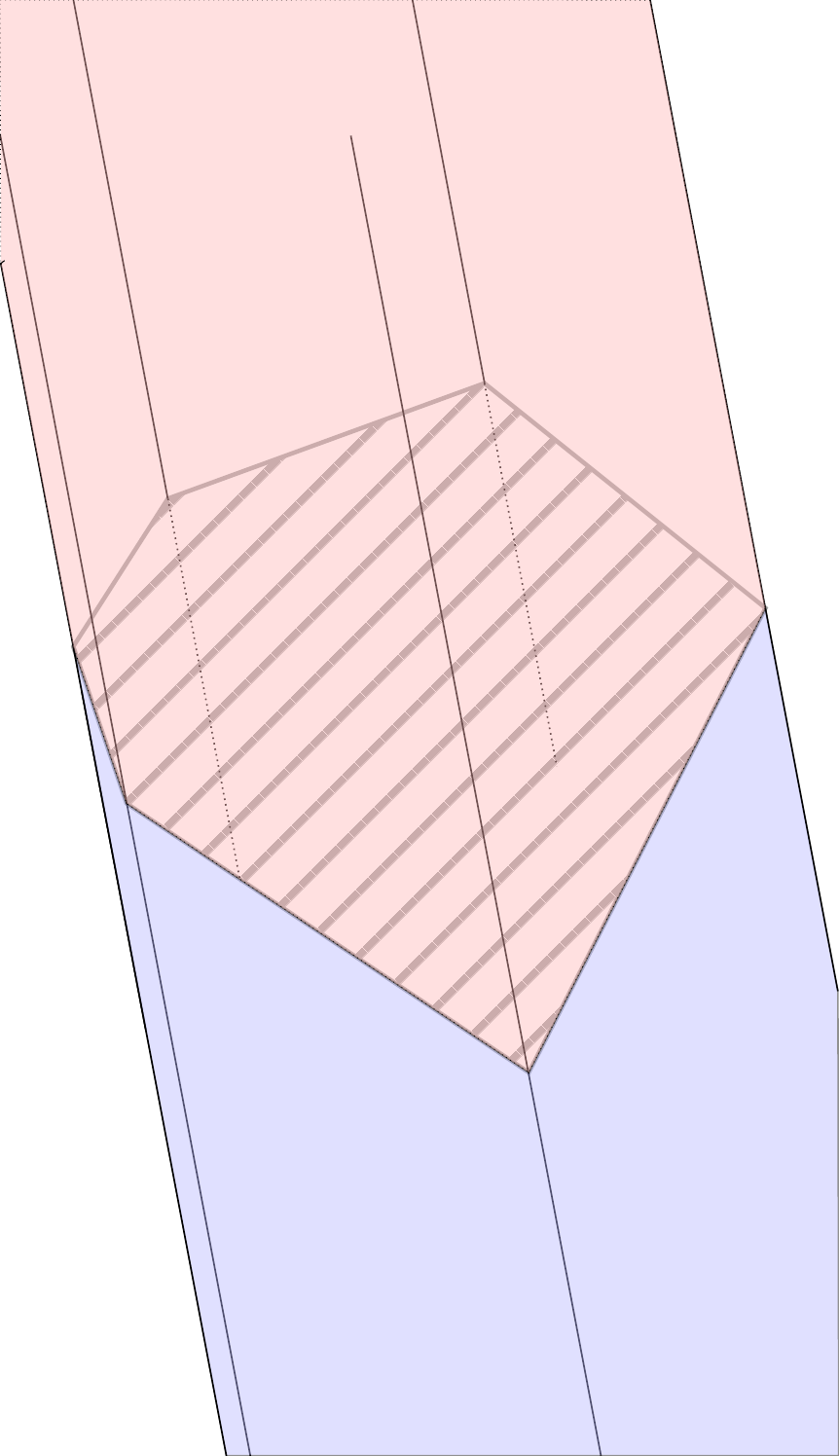}}%
    \put(0.60185706,1.40046246){\color[rgb]{0,0,0}\makebox(0,0)[lt]{\lineheight{1.25}\smash{\begin{tabular}[t]{l}$Q_1^{+}$\end{tabular}}}}%
    \put(0.72805448,0.4700983){\color[rgb]{0,0,0}\makebox(0,0)[lt]{\lineheight{1.25}\smash{\begin{tabular}[t]{l}$Q_1^{-}$\end{tabular}}}}%
    \put(0.41435983,0.91825123){\color[rgb]{0,0,0}\makebox(0,0)[lt]{\lineheight{1.25}\smash{\begin{tabular}[t]{l}$\mathbf{r}$\end{tabular}}}}%
    \put(0.77419114,0.68333583){\color[rgb]{0,0,0}\makebox(0,0)[lt]{\lineheight{1.25}\smash{\begin{tabular}[t]{l}$F_1$\end{tabular}}}}%
    \put(0,0){\includegraphics[width=\unitlength,page=2]{fleanpmframe.pdf}}%
    \put(0.1565796,1.57758291){\color[rgb]{0,0,0}\makebox(0,0)[lt]{\lineheight{1.25}\smash{\begin{tabular}[t]{l}$\mathbf{r}+\lambda\mathbf{b}$\end{tabular}}}}%
    \put(0.37466972,0.27515919){\color[rgb]{0,0,0}\makebox(0,0)[lt]{\lineheight{1.25}\smash{\begin{tabular}[t]{l}$\mathbf{r}+\mu\mathbf{v}$\end{tabular}}}}%
  \end{picture}%
\endgroup%

		\caption{Facet $F_1$ and the corresponding $Q_1$.}
		\label{fig:fleanpmframe}
	\end{figure}

	The set $Q_1^-$ is also unbounded, therefore it contains at least one ray starting in $\mathbf{r}$. Let this ray be $R_{\mathbf{v}}\left( \mathbf{r} \right) $ for some $\mathbf{v} \in \mathbb{S}^{d-1}$.  Since $\mathbf{r} + \mathbf{u} \in \inter H_1^+$ and $\mathbf{r} + \mathbf{v} \in \inter H_1^-$, there is a nontrivial positive combination $\mathbf{c} = \alpha\mathbf{u} + \beta\mathbf{v}, \alpha,\beta \ge 0, \alpha^2 + \beta^2 \neq 0$ such that $\mathbf{r} + \mathbf{c} \in H_1$. Without loss of generality, we can assume $\alpha + \beta = 1$.  For an arbitrary positive coefficient $\gamma$, the vector $r+\gamma \mathbf{c}$ lies in $H_1$. On the other hand, $\mathbf{r} + \gamma\mathbf{c} = \alpha(\mathbf{r} + \gamma \mathbf{u}) + \beta\left( \mathbf{r} + \gamma \mathbf{v} \right)$. It is a convex combination of points in $Q_1$, therefore $\mathbf{r} + \gamma \mathbf{c}$ lies in $Q_1$. We showed that $\mathbf{r} + \gamma \mathbf{c} \in H_1 \cap Q_1 = F_1$ for any positive $\gamma$. However, $F_1$ is bounded, so $\mathbf{c} = \mathbf{o}$. Therefore, any ray in in $Q_1^-$ has direction $-\mathbf{u}$, and any ray in $Q_1^{+}$ has direction $\mathbf{u}$. We showed that $\rec Q$ is a line. 

\end{proof}

Now we are ready to state the following important property of primitive polytopes. We say that the distinct facets $F_i$ and $F_j$ of a convex polyhedron are $k$-adjacent, if $\dim \left( F_i \cap F_j \right) = k $.
\index{$k$-adjacent facets}

\begin{corollary}\label{cor:flean_cyladj}
	Every two facets  of a  non-cylindrical primitive polytope are $(d-2)$-adjacent.
\end{corollary}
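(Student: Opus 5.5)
We argue by contraposition, via Claim \ref{clm:fleanframe}. Suppose $P$ is a primitive polytope and some facet $F_j$ is not $(d-2)$-adjacent to a facet $F_1$. Relabel so that $F_2,\ldots,F_k$ are exactly the facets sharing a $(d-2)$-face with $F_1$; then $F_j$ is not among them. Form $Q_1 = H_2^-\cap\cdots\cap H_k^-$, $Q_1^- = Q_1\cap H_1^-$ and $Q_1^+ = Q_1\cap H_1^+$ as in the claim. The goal is to show that $Q_1^+$ is unbounded. The claim then says $\rec Q_1$ is a line $\lin\{\mathbf{u}\}$, and we will extract from this a spanned $(d-1)$-subspace of the positive basis $\mathcal{P}=\verts(P^o)$, contradicting non-cylindricity.

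\textbf{Step 1: $Q_1^-$ is bounded by the $F_1$-neighbours.} First observe that $Q_1^- = P$. Near $F_1$, the polytope $P$ is cut out by $H_1^-$ together with the halfspaces of the neighbouring facets $H_2^-,\ldots,H_k^-$. A standard local-to-global argument applies here: a facet $F_1$ of a polytope is already the facet of $Q_1\cap H_1^-$. Hence $Q_1\cap H_1^-$ is a convex polyhedral set whose facets adjacent to $F_1$ coincide with those of $P$. The key observation is that the remaining facets of $P$, including $F_j$, are not facets of $Q_1\cap H_1^-$, because a convex polyhedral set is determined locally around any one facet together with its ridge-neighbours.

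I expect this to be the main obstacle. The bare statement ``$Q_1^-=P$'' is false in general, since a non-neighbouring halfspace can still cut. So I would instead phrase the argument with $Q_1\cap H_1^-\supseteq P$, which is a spill of $P$ through the non-neighbours (nonempty, because $F_j$ exists) and is therefore unbounded by primitivity. This is exactly the hypothesis used inside the proof of Claim \ref{clm:fleanframe}.

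\textbf{Step 2: $Q_1^+$ is unbounded.} The cone $\rec Q_1$ is determined by the normals $\mathbf{b}_2,\ldots,\mathbf{b}_k$: it consists of the $\mathbf{x}$ with $\langle \mathbf{x},\mathbf{b}_i\rangle\le 0$ for $2\le i\le k$. Since $Q_1^-$ is unbounded, $\rec Q_1$ contains a nonzero $\mathbf{v}$, and $\langle\mathbf{v},\mathbf{b}_1\rangle\le 0$ because otherwise $\mathbf{v}$ would recede inside $H_1^-$ beyond $F_1$. If $\rec Q_1$ contained no $\mathbf{u}$ with $\langle\mathbf{u},\mathbf{b}_1\rangle>0$, then $\mathbf{b}_1$ would lie in the dual cone, which is $\pos\{\mathbf{b}_2,\ldots,\mathbf{b}_k\}$. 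This contradicts the positive independence of $\mathcal{P}$, by Theorem \ref{thm:facleanposbas}. Hence $Q_1^+$ is unbounded, and Claim \ref{clm:fleanframe} gives $\rec Q_1=\lin\{\mathbf{u}\}$.

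\textbf{Step 3: the spanned hyperplane.} Because $\rec Q_1$ is a line, the vectors $\mathbf{b}_2,\ldots,\mathbf{b}_k$ all lie in $S=\mathbf{u}^\perp$, which is a $(d-1)$-subspace, and by duality they positively span it. They are positively independent as a subset of $\mathcal{P}$. So $\{\mathbf{b}_2,\ldots,\mathbf{b}_k\}$ is a proper subset of $\mathcal{P}$ (it omits $\mathbf{b}_1$) forming a positive basis of $S$. Thus $S$ is a spanned $(d-1)$-subspace and $P$ is cylindrical, which is the desired contradiction.
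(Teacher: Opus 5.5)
Your argument is correct and follows the paper's route. If some facet is not a ridge-neighbour of $F_1$, then $Q_1^-$ omits a facet halfspace and is unbounded by primitivity; Claim~\ref{clm:fleanframe} then forces $\rec Q_1$ to be a line $\lin\{\mathbf{u}\}$, which produces a spanned $(d-1)$-subspace and contradicts non-cylindricity. Your write-up is more complete than the paper's two-line proof. The paper never verifies the claim's hypothesis that $Q_1^+$ is unbounded, which you supply by the Farkas argument using $\mathbf{b}_1\notin\pos\{\mathbf{b}_2,\ldots,\mathbf{b}_k\}$. It also never explains why a line recession cone contradicts non-cylindricity, which you supply by identifying $\pos\{\mathbf{b}_2,\ldots,\mathbf{b}_k\}=H_{\mathbf{u},0}$. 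Just delete the first paragraph of Step~1: its ``local determination'' principle is false (the cube is a counterexample), and you yourself replace it with the correct argument that $Q_1^-\supseteq\bigcap_{i\neq j}H_i^-$ is unbounded.
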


\begin{proof}
	The notation in this proof is the same as it is in Claim \ref{clm:fleanframe}.  Suppose a non-cylindrical primitive polytope $P \subset  \mathbb{E}^d$ has $n \ge d+1$ facets. We pick an arbitrary facet, and denote it by $F_1$. 
	We denote all the facets of $P$ that are $(d-2)$-adjacent to $F_1$ by $\left\{ F_2, \ldots, F_k \right\}$. We will prove that $k = n-1$ and, therefore, $F_1$ shares a $(d-2)$-face with every other facet of $P$.

By Claim \ref{clm:fleanframe}, the set $Q_1^{-}= H_1^{-} \cap H_2^- \cap \dots \cap H_k^-$  is bounded. Therefore, $Q_1^{-} = P$, and $F_1$ is $(d-2)$-adjacent to every other facet of $P$. We showed that an arbitrary facet of $P$ is $(d-2)$-adjacent to every other facet of $P$, and that concludes the proof.

\end{proof}

\begin{remark}
A polytope $P \subset \mathbb{E}^d$ is $m$-neighbourly if every $m$ vertices of $P$ form a face of $P$. If $P$ is a non-cylindrical primitive polytope, then Corollary \ref{cor:flean_cyladj} implies that the polar polytope $P^{o}$ is 2-neighbourly.

\end{remark}

\begin{claim}\label{clm:flean_2facet}
	Let $P = H^{-}_1 \cap \ldots \cap H^{-}_n$ be a non-cylindrical primitive polytope, and let $P'_2$ be the spill of $P$ through the facet $F_2 \subset \bd P$.  Then the facet $F'_1 = P'_2 \cap H_1$ of $P'_2$ is unbounded.

\end{claim}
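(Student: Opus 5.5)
We want to show that after removing the halfspace $H_2^-$, the set $P'_2 = \bigcap_{i\ne 2} H_i^-$ meets $H_1$ in an unbounded set. The plan is to work with the positive basis $\mathcal{P}=\{\mathbf{b}_1,\dots,\mathbf{b}_n\}$ of outer normals given by Theorem \ref{thm:facleanposbas}, and to translate unboundedness into a statement about recession directions.

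First, since $P$ is primitive, $P'_2$ is unbounded and contains $P$. Its recession cone is
\[
\rec P'_2=\{\mathbf{u} : \left<\mathbf{u},\mathbf{b}_i\right>\le 0 \text{ for all } i\ne 2\}.
\]
This cone is nontrivial because $\mathcal{P}\setminus\{\mathbf{b}_2\}$ does not positively span $\mathbb{E}^d$. Every nonzero $\mathbf{u}\in\rec P'_2$ satisfies $\left<\mathbf{u},\mathbf{b}_2\right>>0$, since $\mathcal{P}$ positively spans. The facet $F'_1=P'_2\cap H_1$ is nonempty, because it contains $F_1$. Its recession cone consists of those $\mathbf{u}\in\rec P'_2$ with $\left<\mathbf{u},\mathbf{b}_1\right>=0$. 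So the claim reduces to finding a nonzero $\mathbf{u}$ with
\[
\left<\mathbf{u},\mathbf{b}_1\right>=0,\qquad \left<\mathbf{u},\mathbf{b}_i\right>\le 0 \text{ for all } i\ne 1,2.
\]

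The natural candidate set is $C=\{\mathbf{u} : \left<\mathbf{u},\mathbf{b}_i\right>\le 0,\ i\ne 1,2\}$. This is the recession cone of the spill of $P$ through both $F_1$ and $F_2$. We want $C$ to meet the hyperplane $\mathbf{b}_1^{\perp}$ in a nonzero vector. Suppose it does not. Then $C\setminus\{\mathbf{o}\}$ lies entirely in $\{\left<\cdot,\mathbf{b}_1\right><0\}$ or entirely in $\{\left<\cdot,\mathbf{b}_1\right>>0\}$; this uses that $C$ is convex, since a nonzero vector on each side would give a nonzero convex combination in $\mathbf{b}_1^{\perp}$. The first option is impossible: $\rec P'_2\subset C$ is nonzero, and nonzero vectors of $\rec P'_2$ satisfy $\left<\cdot,\mathbf{b}_1\right>\le 0$, so they would have to lie there, which is fine. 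But by symmetry the spill through $F_1$ alone has a nonzero recession vector $\mathbf{w}$ with $\left<\mathbf{w},\mathbf{b}_2\right>\le 0$ and $\left<\mathbf{w},\mathbf{b}_1\right>>0$, and $\mathbf{w}\in C$ as well. So $C$ contains nonzero vectors on both strict sides of $\mathbf{b}_1^{\perp}$. A suitable convex combination $\mathbf{c}$ of a vector $\mathbf{v}\in\rec P'_2$ and $\mathbf{w}$ then lies in $\mathbf{b}_1^{\perp}\cap C$. This is the same trick as in the proof of Claim \ref{clm:fleanframe}.

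The main obstacle is ensuring that $\mathbf{c}\ne\mathbf{o}$, that is, that $\mathbf{v}$ and $\mathbf{w}$ are not antiparallel. Non-cylindricity is used exactly here. Suppose every such pair were antiparallel, with $\mathbf{w}=-\mathbf{v}$. Then $\left<\mathbf{v},\mathbf{b}_i\right>=0$ for all $i\ne 1,2$. So the vectors $\mathbf{b}_3,\dots,\mathbf{b}_n$ lie in the hyperplane $\mathbf{v}^{\perp}$, while $\mathbf{b}_1$ and $\mathbf{b}_2$ lie on opposite strict sides of it.

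The next step is to show that $\{\mathbf{b}_3,\dots,\mathbf{b}_n\}$ contains a positive basis of $\mathbf{v}^{\perp}$, which yields a spanned $(d-1)$-subspace and contradicts non-cylindricity. Take any $\mathbf{x}\in\mathbf{v}^{\perp}$ and write $\mathbf{x}$ as a positive combination of $\mathcal{P}$. Pairing with $\mathbf{v}$ gives
\[
\lambda_1\left<\mathbf{b}_1,\mathbf{v}\right>+\lambda_2\left<\mathbf{b}_2,\mathbf{v}\right>=0.
\]
Hence the $\mathbf{b}_1,\mathbf{b}_2$ part of the combination is a multiple of a fixed vector $\mathbf{z}\in\mathbf{v}^{\perp}\cap\pos\{\mathbf{b}_1,\mathbf{b}_2\}$. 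The step still needing work is to show that $-\mathbf{z}$, or $\mathbf{z}$ itself, is a positive combination of $\mathbf{b}_3,\dots,\mathbf{b}_n$. I would try to obtain this by applying the same positive-combination argument to $\pm\mathbf{z}$. Failing that, I would fall back on Corollary \ref{cor:flean_cyladj}, the $(d-2)$-adjacency of $F_1$ and $F_2$, to rule out this degenerate configuration.

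Once such a nonzero $\mathbf{c}$ exists, the ray from any point of $F_1$ in direction $\mathbf{c}$ stays in $P'_2\cap H_1$. Therefore $F'_1$ is unbounded.
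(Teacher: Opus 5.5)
Your reduction (find a nonzero $\mathbf{u}\perp\mathbf{b}_1$ with $\left<\mathbf{u},\mathbf{b}_i\right>\le 0$ for $i\ge 3$) is correct, and so is the convex-combination trick with $\mathbf{v}\in\rec P'_2$ and $\mathbf{w}\in\rec P'_1$. The earlier sentence claiming that a nonzero vector on each side of $\mathbf{b}_1^{\perp}$ forces a nonzero combination in $\mathbf{b}_1^{\perp}$ is false, as you later admit. The real gap is the step where non-cylindricity has to enter. You leave it open, and the routes you suggest cannot close it.

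From antiparallelism you keep only $\pm\mathbf{v}\in C$: $\mathbf{b}_3,\dots,\mathbf{b}_n\in\mathbf{v}^{\perp}$, with $\mathbf{b}_1,\mathbf{b}_2$ strictly on opposite sides. This holds in every simplex. Take a minimal positive basis $\mathbf{b}_1,\dots,\mathbf{b}_{d+1}$ and let $\mathbf{v}$ be normal to $\lin\{\mathbf{b}_3,\dots,\mathbf{b}_{d+1}\}$. The relation $\sum\lambda_i\mathbf{b}_i=\mathbf{o}$ with all $\lambda_i>0$ puts $\mathbf{b}_1,\mathbf{b}_2$ on opposite sides, yet $d-1$ vectors cannot positively span $\mathbf{v}^{\perp}$. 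Concretely, take the triangle with $\mathbf{b}_1=(-1,-1)$, $\mathbf{b}_2=(-1,1)$, $\mathbf{b}_3=(1,0)$ and $\mathbf{v}=(0,1)$. Then $\mathbf{z}=(-1,0)\notin\pos\{\mathbf{b}_3\}$.

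What you actually need is $\mathbf{z}$ itself in $\pos\{\mathbf{b}_3,\dots,\mathbf{b}_n\}$, since $\mathbf{x}=\mu\mathbf{z}+\sum_{i\ge 3}\lambda_i\mathbf{b}_i$ with $\mu\ge 0$. Your pairing argument applied to $-\mathbf{z}$ only yields $-\mathbf{z}\in\pos\{\mathbf{b}_3,\dots,\mathbf{b}_n\}$, which holds automatically and does not help. Applied to $\mathbf{z}$, it yields only $(1-\mu)\mathbf{z}\in\pos\{\mathbf{b}_3,\dots,\mathbf{b}_n\}$, possibly with $\mu\ge 1$. Corollary \ref{cor:flean_cyladj} does not help either. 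A simplex is non-cylindrical and its facets are pairwise $(d-2)$-adjacent, so the configuration you want to exclude is not degenerate.

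The missing idea is to apply the contrapositive hypothesis $C\cap\mathbf{b}_1^{\perp}=\{\mathbf{o}\}$ to \emph{all} of $C$, not only to $\rec P'_1\cup\rec P'_2$. Any nonzero $\mathbf{u}\in C$ lies strictly on one side of $\mathbf{b}_1^{\perp}$. Comparing it with whichever of $\mathbf{w}$, $\mathbf{v}$ lies on the other side shows $\mathbf{u}\in\lin\mathbf{v}$, so $C=\lin\mathbf{v}$ exactly. Cone duality (Farkas' lemma) then gives $\pos\{\mathbf{b}_3,\dots,\mathbf{b}_n\}=\{\mathbf{x}:\left<\mathbf{x},\mathbf{u}\right>\le 0 \text{ for all } \mathbf{u}\in C\}=\mathbf{v}^{\perp}$. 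This proper subset of $\mathcal{P}$ is positively independent, so $\mathbf{v}^{\perp}$ is a spanned $(d-1)$-subspace, contradicting non-cylindricity.

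Repaired this way, your argument is the dual of the paper's. The paper works directly with $\pos\{\mathbf{b}_3,\dots,\mathbf{b}_n\}$ and shows that its projection along $\mathbf{b}_1$ onto $\mathbf{b}_1^{\perp}$ is not the whole hyperplane. It splits by the dimension of the cone, using non-cylindricity in dimension $d-1$ and $-\mathbf{b}_1\notin\inter\pos\{\mathbf{b}_3,\dots,\mathbf{b}_n\}$ in dimension $d$. It then takes $\mathbf{x}$ to be the outer normal of a halfspace of $\mathbf{b}_1^{\perp}$ containing that projection.
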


\begin{proof}
	We will demonstrate that there exists a vector $\mathbf{x}$, parallel to $H_1$, such that inequality $\left<\mathbf{x}, \mathbf{b}_i \right> \le 0$ holds for any $i: 3\le i\le n$.  This is a necessary and sufficient condition for $F'_1$ to contain a ray with direction $\mathbf{x}$.  Consider the convex cone $C = \pos \left( \mathbf{b}_3 \cup \ldots \cup \mathbf{b}_n \right) $, and let $C'$ be the projection of $C$ onto $H_1$ along the direction $\mathbf{b}_1$. We will show that $C' \neq H_1$, which is trivial if $\dim C < d-1$.  Suppose $\dim C = d-1$.  The polytope $P$ is non-cylindrical, and $C$ is not a spanned $(d-1)$-subspace of $\mathcal{P}$, so $C \neq \lin C$.  Therefore, $C$ is contained in some halfspace of $\lin C$, and the projection of a $\left(d-1 \right)$-halfspace onto $H_1$ is not the entire hyperplane.

	 Finally, suppose $\dim C = d$.
	 We know $\mathbf{b}_1 \not\in C$, as $\mathcal{P}$
	 is positively independent. Moreover, $\mathcal{P} \setminus \left\{ \mathbf{b}_2 \right\}$ doesn't positively span $\mathbb{E}^d$, so  $\mathbf{b}_1 \not\in \inter\left( -C \right) $.  Therefore, there is a hyperplane $H_C$ that supports $C$ and contains the points $\mathbf{o}, \mathbf{b}_1$. The projection of $H_C$ onto $H_1$ along $\mathbf{b}_1$ supports $C'$, therefore, $C' \neq H_1$.
The convex cone $C'$ is the positive hull of the projections of $\mathbf{b}_3, \ldots, \mathbf{b}_n$ onto $H_1$ along $\mathbf{b}_1$. We showed that $C' \neq H_1$, therefore, $C'$ is contained within some closed halfspace of $H_1$. If $\mathbf{x}$ is the outer normal of that halfspace in $H_1$, then inequality $\left<\mathbf{x}, \mathbf{b}_i \right> \le 0$ holds for any $i: 3\le i\le n$. Therefore, the facet $F'_1$ contains a ray with direction $\mathbf{x}$. That concludes the proof.

\end{proof}

\begin{corollary}\label{cor:fleanfacets}
	Every facet of a non-cylindrical primitive polytope is a primitive $(d-1)$-polytope.
\end{corollary}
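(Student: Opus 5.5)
Fix a facet $F_1 = P \cap H_1$ of the non-cylindrical primitive polytope $P = H_1^- \cap \ldots \cap H_n^-$. By Corollary \ref{cor:flean_cyladj}, $F_1$ is $(d-2)$-adjacent to every other facet $F_2, \ldots, F_n$. Inside the hyperplane $H_1$, $F_1$ is therefore the bounded $(d-1)$-polytope
\[
F_1 = H_1 \cap H_2^- \cap \ldots \cap H_n^-,
\]
with exactly $n-1$ facets $F_1 \cap F_i$, $2 \le i \le n$. Each $H_1 \cap H_i$ is a bounding hyperplane of $F_1$ within $H_1$, and none of them is redundant, because each $F_1\cap F_i$ has dimension $d-2$. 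So the representation of $F_1$ is reduced.

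To show that $F_1$ is primitive, I would check the definition directly. Every spill of $F_1$ omits at least one halfspace, and a spill through several facets contains a spill through a single facet, so it suffices to show that omitting a single halfspace $H_i \cap H_1^-$, $i \ge 2$, gives an unbounded set. The set obtained by omitting it is
\[
H_1 \cap \bigcap_{j \ne 1,i} H_j^-,
\]
which is exactly the facet $F'_1 = P'_i \cap H_1$ of the spill $P'_i$ of $P$ through $F_i$. Claim \ref{clm:flean_2facet}, with the labels $2$ and $i$ interchanged, says this set is unbounded. Hence every spill of $F_1$ is unbounded, and $F_1$ is primitive.

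The main subtle point is that the definition of spill in the paper compares bounding sets. So I must make sure that the bounding set of $F_1$ within $H_1$ is exactly $\{H_1\cap H_i\}_{i\ge2}$, and that any polytope $P'\supsetneq F_1$ in $H_1$ whose bounding set is a proper subset is contained in the intersection with one halfspace omitted. The first fact is the $(d-2)$-adjacency from Corollary \ref{cor:flean_cyladj}. The second holds because $P'$ is the intersection of its own bounding halfspaces, and these are a proper subset of the halfspaces defining $F_1$. Since that intersection is unbounded for every choice of omitted halfspace, no bounded spill exists.

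Alternatively, I could use the equivalent criterion of Theorem \ref{thm:facleanposbas}: show that the projections of $\mathbf{b}_2,\ldots,\mathbf{b}_n$ onto $H_1$ along $\mathbf{b}_1$ form a positive basis. Positive spanning follows from boundedness of $F_1$, and positive independence is exactly the argument in Claim \ref{clm:flean_2facet}. I would present the direct spill argument, as it is shorter.
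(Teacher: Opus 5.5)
Your proposal is correct and is essentially the paper's route: the corollary is placed directly after Claim \ref{clm:flean_2facet} as an immediate consequence of it (relabelled), with Corollary \ref{cor:flean_cyladj} identifying the facets of $F_1$, which is exactly how you use them. Two slips to fix: the omitted halfspace is $H_1\cap H_i^-$, not $H_i\cap H_1^-$; and in your last paragraph the inclusion is reversed --- a set cut out in $H_1$ by a proper subset of the halfspaces \emph{contains} $H_1\cap\bigcap_{i\neq 1,j}H_i^-$ for any omitted index $j$, which is why it is unbounded (with the correct direction you do not even need Corollary \ref{cor:flean_cyladj}, since every facet of $F_1$ is some $F_1\cap F_j$).
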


Let $Q$ be the polytope, polar to a primitive polytope $P$.
For any $i: 1 \le i\le n$, we denote the polar body of the spill $P'_i$ by $Q'_i = \left( P'_i \right)^{o}$. Equivalently, $Q'_i = \conv\left\{  \mathcal{P}  \setminus \left\{   \mathbf{b}_i\right\}  \right\} $.
Since the set $\verts Q = \mathcal{P}$ is a positive basis, we know $ \mathbf{o} \in \inter Q$, but $\mathbf{o} \not\in \inter Q'_i$.  For any $i: 1 \le i \le n$, we denote the dimension of $\rec \left( P'_i \right) $ by $r_i$. Next, we denote the dimension of the face of $Q'_i$, that contains $\mathbf{o}$ in its relative interior, by $f_i$. If for some $i$ the polytope  $Q'_i$ does not contain the origin, then $f_i = 0$.

\begin{claim}\label{clm:flean_rayaltydim}
	Let $P \subset  \mathbb{E}^d$ be a primitive polytope with $n \ge d+1$ facets. Then $r_i + f_i = d$ for any $i: 1 \le i\le n$.
\end{claim}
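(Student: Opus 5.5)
We plan to reduce both quantities to the lineality space of the cone $C_i = \pos\left( \mathcal{P} \setminus \{\mathbf{b}_i\}\right)$, that is, the largest linear subspace $L_i \subset C_i$. We will show that $r_i = d - \dim L_i$ and that $f_i = \dim L_i$.

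For $r_i$ we start from the description of $P'_i$ as $\bigcap_{j \ne i} H^{-}_{\mathbf{b}_j,1}$. A direction $\mathbf{x}$ lies in $\rec P'_i$ if and only if $\left<\mathbf{x}, \mathbf{b}_j\right> \le 0$ for all $j \ne i$. So $\rec P'_i$ is the polar (dual) cone $C_i^{\circ}$ of $C_i$. We then invoke the standard duality fact for closed convex cones: $\lin (C^{\circ}) = (C \cap -C)^{\perp}$, so $\dim C^{\circ} = d - \dim(C\cap -C)$. Since $C\cap -C = L_i$, this gives $r_i = d - \dim L_i$.

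For $f_i$ we first record that $L_i$ is generated by the $\mathbf{b}_j$ it contains. Let $J = \{ j \ne i : -\mathbf{b}_j \in C_i\}$, so that $\mathbf{b}_j \in L_i$ exactly for $j\in J$. Then $L_i = \pos\{\mathbf{b}_j : j \in J\}$; this is a standard property of finitely generated cones, because a positive combination of generators lies in $L_i$ only if every generator with a positive coefficient does. We then split into two cases.

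\emph{Case $J = \emptyset$.} Then $L_i = \{\mathbf{o}\}$. If $\mathbf{o}$ were in $Q'_i$, it would be a nontrivial positive combination of the $\mathbf{b}_j$, $j\ne i$, and every $\mathbf{b}_j$ used would satisfy $-\mathbf{b}_j \in C_i$, which is impossible. So $\mathbf{o}\notin Q'_i$, and $f_i = 0 = \dim L_i$ by the convention.

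\emph{Case $J \neq \emptyset$.} The $\mathbf{b}_j$, $j\in J$, positively span $L_i$. Hence $\mathbf{o}$ is a convex combination of all of them with strictly positive coefficients, so $\mathbf{o}\in Q'_i$. Let $G$ be the face of $Q'_i$ with $\mathbf{o}\in\relint G$. By the usual face property, every point appearing with positive weight in a convex combination equal to a point of $G$ lies in $G$, so $\mathbf{b}_j \in G$ for all $j\in J$. Because $\mathbf{o}\in \aff G$, the set $\aff G = \lin G$ is a linear subspace, and therefore $L_i \subseteq \lin G$.

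Conversely, $\mathbf{o} \in \relint G$ means $\mathbf{o}$ is a strictly positive convex combination of all vertices of $G$. Each vertex $\mathbf{b}_j$ of $G$ therefore has $-\mathbf{b}_j$ in the positive hull of the others, so $j\in J$. Thus $\lin G = \pos(\verts G)\subseteq L_i$. This gives $f_i=\dim G = \dim L_i$, and adding the two identities yields $r_i+f_i=d$.

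The main obstacle is stating cleanly the two cone facts: $\dim C^{\circ} = d - \dim(C\cap -C)$, and that a finitely generated cone's lineality space is generated by the generators lying in it. If citing them is not acceptable, we would prove both directly by writing $\mathbb{E}^d = L_i \oplus L_i^{\perp}$ and noting that the projection of $C_i$ to $L_i^{\perp}$ is pointed, hence has a dual cone with interior in $L_i^{\perp}$. The hypothesis $n\ge d+1$ and the positive independence of $\mathcal{P}$ are used only to guarantee that $\mathbf{b}_i\notin C_i$, so that $P'_i$ is a genuine unbounded spill; the identity itself holds regardless.
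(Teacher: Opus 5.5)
Your proof is correct, and it rests on the same duality as the paper's, but you organize the key step differently.

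The paper splits into two cases. If $\mathbf{o}\notin Q'_i$, strict separation puts an open cap of directions inside $\rec P'_i$, so $r_i=d$. If $\mathbf{o}\in Q'_i$, it observes that $\rec P'_i$ is the normal cone $\mathbf{o}^{\perp}(Q'_i)$ and quotes the polytope identity $\dim G+\dim \mathbf{o}^{\perp}(Q'_i)=d$.

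You instead work with the generated cone $C_i=\pos Q'_i$, which is the tangent cone of $Q'_i$ at $\mathbf{o}$ when $\mathbf{o}\in Q'_i$. You use the cone identity $\dim C^{\circ}=d-\dim(C\cap -C)$. Then you prove by hand, through the generators $\mathbf{b}_j$ with $-\mathbf{b}_j\in C_i$, that the lineality space $L_i$ equals $\lin G$.

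This buys three things:
\begin{enumerate}
\item The two cases merge, since $\mathbf{o}\notin Q'_i$ exactly when $L_i=\{\mathbf{o}\}$.
\item You never need $Q'_i$ to be $d$-dimensional. The quoted normal-cone identity requires this, and the paper does not check it. It does hold whenever $\mathbf{o}\in Q'_i$: $\mathcal{P}\setminus\{\mathbf{b}_i\}$ still spans $\mathbb{E}^d$ linearly because $-\mathbf{b}_i\in\pos\mathcal{P}$, and $\aff Q'_i$ then passes through $\mathbf{o}$. In general, though, $Q'_i$ can be lower-dimensional, e.g.\ a $(d-1)$-simplex for a minimal basis.
\item You obtain explicitly $G=\conv\{\mathbf{b}_j : j\in J\}$ and $\lin G=\pos\{\mathbf{b}_j : j\in J\}$. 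This is precisely the fact the paper invokes later, when it calls $\verts G$ a positive basis of the spanned subspace $\lin G$.
\end{enumerate}

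The paper's route is shorter only because it outsources the complementarity to a standard fact.
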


\begin{proof}

	Let $P'_i, 1\le i \le n$ be the spill of $P$ through the facet $F_i$, and let $\mathbf{u} \in \mathbb{S}^{d-1}$ be an arbitrary direction.
	The ray $R_\mathbf{u}(\mathbf{o})$ lies in $\rec P'_i$ if and only if $\left<\mathbf{u}, \mathbf{x} \right> \le 0 $ for any $\mathbf{x} \in Q'_i = P^{o}_i$.  Suppose  $\mathbf{o} \not\in  Q'_i$. In this case, $f_i = 0$.  There is a point $\mathbf{r} \in \mathbb{S}^{d-1}$, such that $\left<\mathbf{x}, \mathbf{r} \right> < 0$ for any $\mathbf{x} \in Q'_i$. Therefore, there exists some small $\varphi$, such that  $\left|\widehat{\mathbf{r}, \mathbf{r}'}\right| < \varphi $ for any $\mathbf{r}' \in C\left(\mathbf{r},\varphi\right)$. Then $C\left(\mathbf{r},\varphi\right) \subset \rec P_i'$, and $r_i = d = r_i + f_i $.

Now, suppose $\mathbf{o} \in Q'_i$. Let $G'_i$ be the face of $Q'_i$, that contains $\mathbf{o}$ in its relative interior.  A direction  $\mathbf{r} \in \mathbb{S}^{d-1}$ lies in $\rec P'_i$ if and only if $\mathbf{r}$ is an outer normal of $Q'_i$ at $\mathbf{o}$. That means $r_i = \dim \mathbf{o}^{\perp}(Q'_i)$, and, therefore, $r_i + f_i = \dim \mathbf{o}^{\perp}(Q'_i) + \dim G'_i = d$. That concludes the proof.
\end{proof}

\subsection{Proof of Theorem \ref{thm:fleanill}}

Now we can prove the Illumination Conjecture for the primitive polytopes.
The proof is inductive: the only primitives in $\mathbb{E}^{2}$ are triangles and parallelograms, their illumination numbers are 3 and 4 respectively.
Suppose that the Illumination Conjecture holds for the primitive polytopes in $\mathbb{E}^k: k \in \left\{1, \ldots, d-1 \right\} $ for some fixed $d \ge 3$.

We will use the separation lemma, proved by P. Soltan and V. Soltan \cite{soltanXrayingConvexBodies1986} and, independently, for illumination by affine subspaces, by K. Bezdek \cite{bezdekProblemIlluminationBoundary1991}:

\begin{lemma}\label{lem:polarill}
 Let $\mathbf{v}$ be a boundary point of a convex body $K \in \mathcal{K}^{d}, \mathbf{o} \in \inter K$, and let $F_\mathbf{v}$ be the smallest face of $K$ containing $\mathbf{v}$. A direction $\mathbf{u} \in \mathbb{S}^{d-1}$ illuminates $\mathbf{v}$ if and only if the hyperplane $H_{\mathbf{v},0}$
strictly separates $F_\mathbf{v}^{o}$ from $\mathbf{o}$.
\end{lemma}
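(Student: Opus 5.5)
We read $H_{\mathbf{v},0}$ in the statement as a typo for $H_{\mathbf{u},0}$, and $F_{\mathbf{v}}^{o}$ as the polar face $F_{\mathbf{v}}^{*}$. Since $H_{\mathbf{u},0}$ passes through $\mathbf{o}$, we take ``strictly separates $F_{\mathbf{v}}^{*}$ from $\mathbf{o}$'' to mean that $F_{\mathbf{v}}^{*}$ lies in the open halfspace $\{\mathbf{x} : \left<\mathbf{x},\mathbf{u}\right> < 0\}$. So the claim to prove is: $\mathbf{u}$ illuminates $\mathbf{v}$ if and only if $\left<\mathbf{x},\mathbf{u}\right><0$ for every $\mathbf{x}\in F_{\mathbf{v}}^{*}$. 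The plan has two steps. First, show that $\mathbf{u}$ illuminates $\mathbf{v}$ if and only if $\left<\mathbf{n},\mathbf{u}\right><0$ for every outer normal $\mathbf{n}\in\mathbf{v}^{\perp}(K)$. Second, show that $F_{\mathbf{v}}^{*}$ is exactly a cross-section of $\mathbf{v}^{\perp}(K)$, with every ray of the cone meeting it. Both conditions are invariant under positive scaling of the normal, so the two steps together give the lemma.

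\emph{Step 1: if some outer normal is non-negative on $\mathbf{u}$, then $\mathbf{v}$ is not illuminated.} Suppose $\mathbf{n}$ is an outer normal at $\mathbf{v}$ with $\left<\mathbf{n},\mathbf{u}\right>\ge 0$. Then every point of $R_{\mathbf{u}}(\mathbf{v})$ satisfies $\left<\mathbf{x},\mathbf{n}\right>\ge\left<\mathbf{v},\mathbf{n}\right>$. Every point of $\inter K$ satisfies $\left<\mathbf{x},\mathbf{n}\right><\left<\mathbf{v},\mathbf{n}\right>$. Hence the ray misses $\inter K$.

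\emph{Step 1: if $\mathbf{v}$ is not illuminated, then some outer normal is non-negative on $\mathbf{u}$.} Suppose $R_{\mathbf{u}}(\mathbf{v})\cap\inter K=\emptyset$. Both sets are convex and $\inter K$ is nonempty and open, so the separation theorem gives a hyperplane $H$ with $K$ on one closed side and the ray on the other. The point $\mathbf{v}$ lies in $K$ and on the ray, so $\mathbf{v}\in H$. Thus $H$ supports $K$ at $\mathbf{v}$, and its outer normal $\mathbf{n}$ satisfies $\left<\mathbf{v}+t\mathbf{u}-\mathbf{v},\mathbf{n}\right>\ge 0$ for all $t\ge 0$, that is, $\left<\mathbf{u},\mathbf{n}\right>\ge0$. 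This route avoids any tangent-cone or compactness argument.

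\emph{Step 2: identify $F_{\mathbf{v}}^{*}$ with normalized outer normals.} Let $\mathbf{n}$ be an outer normal at $\mathbf{v}$ and set $h=\left<\mathbf{n},\mathbf{v}\right>=\max_{\mathbf{y}\in K}\left<\mathbf{n},\mathbf{y}\right>$. Since $\mathbf{o}\in\inter K$, we have $h>0$. Put $\mathbf{x}=\mathbf{n}/h$. Then $\mathbf{x}\in K^{o}$ and $\left<\mathbf{x},\mathbf{v}\right>=1$. Conversely, any $\mathbf{x}\in K^{o}$ with $\left<\mathbf{x},\mathbf{v}\right>=1$ is an outer normal at $\mathbf{v}$. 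It remains to show that the hyperplane $\{\left<\mathbf{x},\cdot\right>=1\}$ contains all of $F_{\mathbf{v}}$, not just $\mathbf{v}$. Because $F_{\mathbf{v}}$ is the smallest face containing $\mathbf{v}$, the point $\mathbf{v}$ lies in $\relint F_{\mathbf{v}}$. The linear functional $\left<\mathbf{x},\cdot\right>$ is at most $1$ on the convex set $F_{\mathbf{v}}$ and attains $1$ at a relative interior point, so it is identically $1$ on $F_{\mathbf{v}}$. Hence
\[
F_{\mathbf{v}}^{*}=\{\mathbf{x}\in K^{o}:\left<\mathbf{x},\mathbf{y}\right>=1 \text{ for all } \mathbf{y}\in F_{\mathbf{v}}\}=\{\mathbf{n}/h(\mathbf{n}):\mathbf{n}\in\mathbf{v}^{\perp}(K)\},
\]
and combining with Step 1 finishes the proof.

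The only delicate points are interpretive and this relative-interior argument. The step I expect to be the main obstacle is the converse direction of Step 1. A direct approach through the interior of the tangent cone works but is messier, and the separation-theorem argument is the one I would use.
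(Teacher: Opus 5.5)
The paper does not prove this lemma. It quotes it from P.~Soltan and V.~Soltan and from K.~Bezdek and uses it as a black box, so your argument is a self-contained substitute for the citation rather than a variant of anything in the paper. It is correct.

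Your reading of the garbled statement is the intended one:
\begin{itemize}
\item $H_{\mathbf{u},0}$ for $H_{\mathbf{v},0}$;
\item the polar face $F_{\mathbf{v}}^{*}$ for $F_{\mathbf{v}}^{o}$;
\item since the hyperplane passes through $\mathbf{o}$, ``strict separation from $\mathbf{o}$'' means $F_{\mathbf{v}}^{*}\subset\{\mathbf{x}:\left<\mathbf{x},\mathbf{u}\right><0\}$. The sign checks out at a vertex of a square.
\end{itemize}
Reducing to the normal-cone criterion for illumination, and then identifying $F_{\mathbf{v}}^{*}$ with the outer normals $\mathbf{n}\in\mathbf{v}^{\perp}(K)$ normalized by $\left<\mathbf{n},\mathbf{v}\right>=1$, is the natural direct route. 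Your separation argument for the converse of Step~1 is the clean way to do it. What the cited sources add is generality (Bezdek's version is formulated for illumination by affine subspaces), which the paper never uses.

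One sentence in Step~2 needs repair to match the paper's conventions. The paper defines a face as $K$ intersected with a supporting hyperplane, i.e.\ an exposed face. For exposed faces, ``$\mathbf{v}\in\relint F_{\mathbf{v}}$'' can fail when $K$ is not a polytope. Take the stadium $\conv(\mathbf{B}_1((-1,0))\cup\mathbf{B}_1((1,0)))$ and $\mathbf{v}=(1,1)$. The only supporting line at $\mathbf{v}$ is the horizontal one, so the smallest face containing $\mathbf{v}$ is the segment $[(-1,1),(1,1)]$, and $\mathbf{v}$ is an endpoint of it.

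Your relative-interior argument is valid for faces in the extreme-subset sense. It is also valid for polytopes, which is the only case where the paper applies the lemma. Under the exposed-face definition you can bypass it: if $\mathbf{x}\in K^{o}$ and $\left<\mathbf{x},\mathbf{v}\right>=1$, then $K\cap\{\mathbf{y}:\left<\mathbf{x},\mathbf{y}\right>=1\}$ is itself a face containing $\mathbf{v}$, so it contains $F_{\mathbf{v}}$ by minimality. In either reading $F_{\mathbf{v}}^{*}=\{\mathbf{x}\in K^{o}:\left<\mathbf{x},\mathbf{v}\right>=1\}$, and the rest of your proof stands.
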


First, we consider the case when every facet of $P$ has recession dimension 1.
 \begin{claim} If $P = H^{-}_1 \cap \ldots \cap H^{-}_n$ is a primitive $d$-polytope, and  $\rec F_i = 1$ for $i: 1\le i \le n$, then $P$ is an affine $d$-cube.
\end{claim}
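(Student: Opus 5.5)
We read the hypothesis ``$\rec F_i = 1$'' as $r_i = \dim \rec(P'_i) = 1$ for every $i$, i.e. every spill of $P$ through a single facet has a one-dimensional recession cone. The plan is to translate this into a statement about the positive basis $\mathcal{P} = \{\mathbf{b}_1,\ldots,\mathbf{b}_n\} = \verts Q$ via Claim \ref{clm:flean_rayaltydim}. We will then show that $\mathcal{P}$ consists of $d$ linearly independent antipodal pairs, i.e. it is a maximal positive basis. A polytope whose facet normals are $\pm\mathbf{c}_1,\ldots,\pm\mathbf{c}_d$ with $\mathbf{c}_k$ linearly independent is an intersection of $d$ slabs, hence a linear (affine) image of a $d$-cube.

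\textbf{Step 1 (each removal leaves a spanned hyperplane).} By Claim \ref{clm:flean_rayaltydim}, $r_i=1$ gives $f_i = d-1$. So $\mathbf{o}$ lies in the relative interior of a facet $G_i$ of $Q'_i = \conv(\mathcal{P}\setminus\{\mathbf{b}_i\})$. Let $S_i = \aff G_i$, which is a linear hyperplane with normal $\mathbf{u}_i$, and put $\mathcal{P}_i = \mathcal{P}\cap S_i$. Since $\mathbf{o}\in\relint \conv \mathcal{P}_i$, the set $\mathcal{P}_i$ positively spans $S_i$. It is positively independent as a subset of $\mathcal{P}$, so $S_i$ is a spanned $(d-1)$-subspace and $P$ is cylindrical. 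All vectors of $\mathcal{P}\setminus(\mathcal{P}_i\cup\{\mathbf{b}_i\})$ lie strictly on one side, $\left<\mathbf{u}_i,\cdot\right><0$. Since $\mathcal{P}$ positively spans $\mathbb{E}^d$, $\mathbf{b}_i$ lies strictly on the other side.

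\textbf{Step 2 (exactly one vector on the negative side).} Suppose two vectors $\mathbf{b}_j,\mathbf{b}_l$ satisfy $\left<\mathbf{u}_i,\mathbf{b}_j\right>,\left<\mathbf{u}_i,\mathbf{b}_l\right><0$. Then $\mathcal{P}\setminus\{\mathbf{b}_j\}$ still contains $\mathcal{P}_i$, $\mathbf{b}_i$ and $\mathbf{b}_l$. These positively span $S_i$ and contain one vector on each side of it, so they positively span $\mathbb{E}^d$. Hence $\mathbf{b}_j\in\pos(\mathcal{P}\setminus\{\mathbf{b}_j\})$, contradicting positive independence. Thus $\mathcal{P} = \mathcal{P}_i\cup\{\mathbf{b}_i,\mathbf{b}_{j(i)}\}$ for a unique index $j(i)$.

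\textbf{Step 3 (induction on $d$).} We want to show that $\mathcal{P}_i$, viewed as a positive basis of $S_i$, again has the property that removing any one vector leaves a set whose convex hull contains $\mathbf{o}$ in the relative interior of a facet of dimension $d-2$. Then induction on $d$ gives that $\mathcal{P}_i$ is $d-1$ antipodal pairs $\pm\mathbf{c}_1,\ldots,\pm\mathbf{c}_{d-1}$; the base case $d=2$ is the parallelogram. Take $\mathbf{b}_k\in\mathcal{P}_i$ and apply Step 1 to $k$. This produces a hyperplane $S_k$ through $\mathbf{o}$ whose vectors positively span it and which avoids $\mathbf{b}_k$. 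By Step 2 applied to $k$, only two vectors of $\mathcal{P}$ lie off $S_k$, namely $\mathbf{b}_k$ and one partner. Positive independence forces the partner to be in $S_i$ as well: otherwise $S_k$ would contain only one of $\mathbf{b}_i,\mathbf{b}_{j(i)}$, and those two, being on opposite sides of $S_i$, cannot both be removed while keeping a positive spanning set of $S_k$. Hence $S_k$ contains both $\mathbf{b}_i$ and $\mathbf{b}_{j(i)}$, and $S_k\cap S_i$ is the required $(d-2)$-dimensional spanned subspace of $S_i$. With $\mathcal{P}_i=\{\pm\mathbf{c}_1,\ldots,\pm\mathbf{c}_{d-1}\}$ in hand, the vector $\mathbf{c}_k$ has partner $-\mathbf{c}_k$. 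The hyperplane $S_k$ therefore contains $\mathbf{b}_i$, $\mathbf{b}_{j(i)}$ and all $\pm\mathbf{c}_m$ with $m\neq k$. Intersecting over $k=1,\ldots,d-1$ puts $\mathbf{b}_i$ and $\mathbf{b}_{j(i)}$ on a single line through $\mathbf{o}$ transversal to $S_i$. Since they lie on opposite sides of $S_i$, they are antipodal up to scaling. This yields $d$ antipodal pairs, so $P$ is an affine $d$-cube.

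The main obstacle is Step 3: checking carefully that the hypothesis descends to $\mathcal{P}_i$ inside $S_i$, i.e. that the partner of $\mathbf{b}_k$ indeed lies in $S_i$ and that $S_k$ contains both $\mathbf{b}_i$ and $\mathbf{b}_{j(i)}$. If that bookkeeping turns out to be delicate, the fallback is to avoid induction. One would directly show, for each $k$, that the partner of $\mathbf{b}_k$ is a negative multiple of it, by applying Step 2 symmetrically to both vectors of a pair.
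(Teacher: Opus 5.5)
Your Steps 1 and 2 are sound and reproduce the pairing in the paper's proof: each $\mathbf{b}_i$ has a unique partner $\mathbf{b}_{j(i)}$, and $\mathcal{P}\setminus\{\mathbf{b}_i,\mathbf{b}_{j(i)}\}$ positively spans a linear hyperplane $S_i$ that strictly separates the two.

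The gap is in Step 3, at ``$S_k\cap S_i$ is the required $(d-2)$-dimensional spanned subspace of $S_i$.'' You have shown only that $S_k\cap S_i$ \emph{contains} $\mathcal{P}\setminus\{\mathbf{b}_i,\mathbf{b}_{j(i)},\mathbf{b}_k,\mathbf{b}_{j(k)}\}$. You have not shown that these vectors positively span it, and this does not follow. Within $S_k$, the vectors $\mathbf{b}_i,\mathbf{b}_{j(i)}$ lie strictly on opposite sides of $S_k\cap S_i$. So some combination $\mathbf{b}_i+\gamma\mathbf{b}_{j(i)}$ with $\gamma>0$ lies in $S_k\cap S_i$, and positive representations of vectors of $S_k\cap S_i$ may need exactly such combinations. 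Discarding $\mathbf{b}_i$ and $\mathbf{b}_{j(i)}$ can therefore destroy positive spanning, so the hypothesis need not descend to $\mathcal{P}_i$.

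No further bookkeeping can close this, and neither can your fallback, because the statement is false for $d\ge 5$. Let $d=5$ and $\mathcal{P}=\{\pm\mathbf{e}_1+\mathbf{e}_5,\pm\mathbf{e}_2+\mathbf{e}_5,\pm\mathbf{e}_3-\mathbf{e}_5,\pm\mathbf{e}_4-\mathbf{e}_5\}$.
\begin{itemize}
\item Each vector is the only member of $\mathcal{P}$ with its sign in its nonzero coordinate among $x_1,\ldots,x_4$, so $\mathcal{P}$ is positively independent.
\item $\pm\mathbf{e}_5\in\pos\mathcal{P}$, hence every $\pm\mathbf{e}_m\in\pos\mathcal{P}$, so $\mathcal{P}$ is a positive basis.
\item Thus $P=\{\mathbf{x}:|x_1|+x_5\le 1,\ |x_2|+x_5\le 1,\ |x_3|-x_5\le 1,\ |x_4|-x_5\le 1\}$ is a primitive $5$-polytope with $8$ facets. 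An affine $5$-cube has $10$ facets, so $P$ is not one.
\item Dropping $x_1+x_5\le 1$, the recession cone of the spill is cut out by $|x_3|,|x_4|\le x_5\le -|x_2|$ and $x_5\le x_1$. This is the ray $\pos\{\mathbf{e}_1\}$, and by symmetry $r_i=1$ for every $i$.
\end{itemize}
In your Step 3 with $\mathbf{b}_i=\mathbf{e}_1+\mathbf{e}_5$ and $\mathbf{b}_k=\mathbf{e}_2+\mathbf{e}_5$, the space $S_i\cap S_k=\{x_1=x_2=0\}$ contains only $\pm\mathbf{e}_3-\mathbf{e}_5$ and $\pm\mathbf{e}_4-\mathbf{e}_5$ from $\mathcal{P}$. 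All of these have $x_5=-1$, so they do not positively span it. Indeed $\mathcal{P}_i$ is a positive basis of $S_i$ with $6$ vectors, not $8$.

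The paper's proof has the matching hole. It asserts that $\bigcap_{k\ne j}G_k$ is the segment $[\mathbf{b}_{2j-1},\mathbf{b}_{2j}]$, which silently assumes $n=2d$. In this example, for the pair $\pm\mathbf{e}_1+\mathbf{e}_5$, that intersection is the triangle $\conv\{\mathbf{e}_1+\mathbf{e}_5,-\mathbf{e}_1+\mathbf{e}_5,-\mathbf{e}_5\}$.

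What both arguments lack is a proof that the normals of the $n/2$ pair hyperplanes have trivial common kernel, i.e.\ that $n=2d$. A counting argument on that kernel forces this when $d\le 4$, but the example shows it fails in general.
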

\begin{proof}

	We use the polar polyhedron $Q = P^{o} = \conv\left\{ \mathbf{b}_1, \ldots, \mathbf{b}_n \right\} $, where each $\mathbf{b}_i, 1\le i \le n$ is the vertex of $Q$, polar to the facet $F_i$ of $P$.  If a set of vertices of $Q$ spans a $\left( d-1 \right)$-subspace, then exactly two vertices of $Q$ lie outside that subspace, and are strictly separated by it. By Claim \ref{clm:flean_rayaltydim}, $f_i = d-1$ for any $i: 1\le i \le n$. For any $i: 1\le i \le n$ the point $\mathbf{b}_i$ is strictly separated by some spanned $\left( d-1 \right) $-subspace from some other vertex of $Q$.  Therefore, $n$ is even, and we can index the vertices of $Q$ so that for every $k: 1\le k \le \frac{n}{2}$, the vertices $\mathbf{b}_{2k-1}, \mathbf{b}_{2k} $ are strictly separated by the affine hull of all the other vertices of $Q$. Let $G_k, 1\le k \le \frac{n}{2}$ be the convex hull of the points $\left( \verts Q  \right) \setminus \left\{ \mathbf{b}_{2k-1}, \mathbf{b}_{2k} \right\}$.

	We know that $\mathbf{o} \in  \relint G_k$ for any $k: 1\le k \le \frac{n}{2}$. 
	We pick an arbitrary integer $j: 1\le j \le \frac{n}{2}$. The origin point lies in $\relint \left\{ \cap G_k  : 1\le k \le \frac{n}{2}, k \neq j   \right\} $. 
	However, since every pair $ \left\{ \mathbf{b}_{2k-1}, \mathbf{b}_{2k}\right\}$ is strictly separated by the hyperplane $\aff G_k$, the set $\left\{ \cap G_j  : 1\le k \le \frac{n}{2}, k \neq j \right\} $ is the segment $\left[ \mathbf{b}_{2j-1}, \mathbf{b}_{2j} \right] $. Polytope $Q$ is the convex hull of $\frac{n}{2}$ segments, and each of these segments contains $\mathbf{o}$ in its relative interior. Hence, $P = Q^{o}$ is an affine cube.

\end{proof}

We showed that the only primitive polyhedron $P \in \mathcal{K}^d$, that has all its facets' recession dimensions equal to $1$, is an affine cube. Now, suppose $P$ has at least one facet with recession dimension over 1. 

We pick the facet $F_i \subset \bd P$ with the greatest recession dimension $r_i > 1$. Now, suppose $r_i > 1$. We look at $Q'_i = (P'_i)^{o} = \conv\left( \mathcal{P} \setminus \mathbf{b}_i \right) $, the polar set of $P$'s spill through $F_i$. Let $G \subset \bd Q'_i$ be the face of $Q'_i$ that contains $\mathbf{o}$ in its relative interior. If $\mathbf{o} \not\in Q'_i$, then $G = \emptyset$. The set $\verts G$ is a proper subset of $\mathcal{P}$, so it is positively independent. The face $G$ contains $\mathbf{o}$ in its relative interior, therefore, set $\verts G$ is a positive basis of the spanned subspace $S = \lin(G)$. Dimension of $S$ is equal to $f_i = d-r_i$. By inductive assumption, all facets of $G \subset S$ can be separated from $\mathbf{o}$ with at most $2^{f_i}$ $f_i$-flats through the origin.  Since $\mathbf{o} \not\in \inter Q'_i$, the set $Q'_i$ can be separated from the origin $\mathbf{o}$ by some hyperplane $H'_i$ that contains $\mathbf{o}$.

If an $f_i$-flat $J \subset S$ separates a facet of $G$ from $\mathbf{o}$ in $S$, then we can pick a hyperplane in $\mathbb{E}^d$ that contains $J$ and separates all the vertices in $\verts Q_i' \setminus \verts G $.
A set of at most $2^{f_i}$ hyperplanes like that separates all the facets of $Q$ that do not contain $v_i$. Therefore, the respective normal directions illuminate all vertices in $\verts P \setminus F_i$. That means $\Ill(P) \le 2^{f_i}+\Ill_{d-1}\left( F_i \right) \le 2^{f_i} + 2^{d-1} < 2^{d}$. That concludes the proof of Theorem \ref{thm:fleanill}.


\printbibliography

\bigskip

\noindent Illya Ivanov\\
\small{Department of Mathematics and Statistics, University of Calgary, Canada}\\
\small{E-mail: \texttt{illya.ivanov@tutanota.com}}

\end{document}